\documentclass[11pt]{article}
\usepackage{geometry}                
\geometry{letterpaper}                   
\usepackage{graphicx}
\usepackage{amsmath}
\usepackage{enumerate}
\usepackage{setspace}
\usepackage{amssymb}
\usepackage{amsmath, amsthm}
\usepackage{amsmath}
\usepackage{amsthm}
\usepackage{blkarray}
\usepackage{epstopdf}
\usepackage{footnote}
\newtheorem{thm}{Theorem}[section]
\newtheorem{Proposition}{Proposition}[section]

\newtheorem{Lemma}[thm]{Lemma}

\DeclareGraphicsRule{.tif}{png}{.png}{`convert #1 `dirname #1`/`basename #1 .tif`.png}

\begin{document}\large{
\title{Asymptotic Properties of Discrete Minimal 
 $s,\log^t$-Energy Constants and Configurations}
\author{Nichakan Loesatapornpipit\footnote{Results of this article constitute part of Nichakan Loesatapornpipit's senior project under the mentorship of Nattapong Bosuwan at
Mahidol University.} and Nattapong Bosuwan}
\maketitle

\centerline {Department of Mathematics, Faculty of Science, Mahidol University}
\centerline {Rama VI Road, Ratchathewi District, Bangkok 10400, Thailand}
\centerline {Correspondence e-mail : {\tt nattapong.bos@mahidol.ac.th}}
\centerline {Centre of Excellence in Mathematics, CHE}
\centerline {Si Ayutthaya Road, Bangkok 10400, Thailand}


\section*{Abstract}
Combining the ideas of  Riesz $s$-energy and $\log$-energy, we introduce the so-called $s,\log^t$-energy. In this paper, we investigate the asymptotic behaviors for $N,t$ fixed and $s$ varying of minimal $N$-point $s,\log^t$-energy constants and configurations of an infinite compact metric space of diameter less than $1$. In particular, we study certain continuity and differentiability properties of minimal $N$-point $s,\log^t$-energy constants in the variable $s$ and we show that in the limits as $s\rightarrow \infty$ and as $s\rightarrow s_0>0,$ minimal $N$-point $s,\log^t$-energy configurations tend to an $N$-point best-packing configuration and a minimal $N$-point $s_0,\log^t$-energy configuration, respectively. Furthermore, the optimality of $N$ distinct equally spaced points on circles in $\mathbb{R}^2$ for some certain $s,\log^t$ energy problems was proved.

\quad

 \section*{Keywords}{discrete minimal energy; best-packing; Riesz energy; logarithmic energy.}

\section{Introduction}

The general setting of discrete minimal energy problem is the following.
Let $(A,d)$ be an infinite compact metric space and $K: A\times A \rightarrow \mathbb{R}\cup \{\infty\}$ be a lower semicontinuous kernel.
For a fixed set of $N$ points $\omega_N\subset A,$ we define the \emph{$K$-energy of $\omega_N$} as follows
$$E_K(\omega_N):=\sum_{\substack{ x\not=y  \\ x,y \in\omega_N}}K(x,y).$$
The \emph{minimal $N$-point $K$-energy of the set $A$} is defined by
$$\mathcal{E}_{K}(A,N):=\min_{\substack{\omega_N \subset A  \\ \#\omega_N=N}} E_K(\omega_N),$$
where $\#\omega_N$ stands for the cardinality of the set $\omega_N.$
A \emph{minimal $N$-point $K$-energy configuration} is a configuration $\omega_N^{K}$ of $N$ points in $A$ that minimizes such energy, namely
\begin{equation*}
E_K(\omega_N^{K})=\min_{\substack{\omega_N \subset A  \\ \#\omega_N=N}} E_K(\omega_N).
\end{equation*}
It is known that $\omega_N^{K}$ always exists and in general $\omega_N^{K}$ may not be unique.

Two important kernels in the theory on  minimal energy are Riesz and logarithmic kernels.
The \emph{(Riesz) $s$-kernel} and \emph{$\log$-kernel} are defined by
\begin{equation}\label{eq2}
K^s(x,y):=\frac{1}{d(x,y)^s},\quad s\geq 0.
\end{equation}
and
$$
K_{\log}(x,y):=\log \frac{1}{d(x,y)},
$$
for all $(x,y)\in A\times A,$
respectively. It is not difficult to check that both kernels are lower semicontinuous on $A\times A.$
The \emph{$s$-energy of $\omega_N$} and the \emph{minimal $N$-point $s$-energy of the set $A$} are
$$
E^s(\omega_N):=E_{K^s}(\omega_N) \quad \textup{and} \quad \mathcal{E}^s(A,N):=\min_{\substack{\omega_N \subset A  \\ \#\omega_N=N}} E^s(\omega_N)
$$
and
we denote by $\omega_N^s:=\omega_N^{K^s}$ and call this configuration a \emph{minimal $N$-point $s$-energy configuration}. Similarly, the \emph{$\log$-energy of $\omega_N$} and the \emph{$N$-point $\log$-energy of the set $A$} are
$$
E_{\log}(\omega_N):=E_{K_{\log}}(\omega_N) \quad \textup{and} \quad \mathcal{E}_{\log}(A,N):=\min_{\substack{\omega_N \subset A  \\ \#\omega_N=N}} E_{\log}(\omega_N)
$$
and
we denote by $\omega_N^{\log}:=\omega_N^{K_{\log}}$ and call this configuration a \emph{minimal $N$-point $\log$-energy configuration}. 

Let us provide a short survey of these two energy problems. 

The study of $s$-energy constants and configurations has a long history in physics, chemistry, and mathematics. Finding the arrangements of $\omega_N^s$ where the set $A$ is the unit sphere $\mathbb{S}^2$ in the Euclidean space $\mathbb{R}^3$ has been an active area since the beginning of the 19th century. The problem is known as the generalized Thomson problem (see \cite{Thomson}  and \cite[Chapter 2.4]{SaffHardinBoroda}). Candidates for $\omega_N^s$ for several numbers of $N$ are available  (see, e.g., \cite{Candidates}).  However, the solutions (with rigorous proofs) are obtainable for handful values of $N$ (see, e.g., \cite{Foppl,Yudin,Andreev,5points}).
For a general compact set $A$ in the Euclidean space $\mathbb{R}^m,$ the study of the distribution of minimal $N$-point $s$-energy configurations of $A$ as $N\rightarrow \infty$ can be founded in \cite{Landkof} and \cite{saff}. In \cite{saff}, it was shown that when $s$ is any fixed number greater than the Hausdorff dimension of $A,$ minimal $N$-point $s$-energy configurations of $A$ are ``good points" to represent the set $A$ in the sense that they are asymptotically uniformly distributed over the set $A$ (see the precise statement in \cite[Theorems 2.1 and 2.2]{saff}).

The $\log$-energy problem has been heavily studied when $A$ is a subset of the Euclidean space $\mathbb{R}^2$ (or $\mathbb{C}$) because it has had a profound influence in approximation theory (see, e.g., \cite{Mhaskar,GoncharRakhmanov1989,Lubinsky,Totik,SaffTotik}). For $A\subset \mathbb{C},$ the points in $\omega_N^{\log}$ are commonly known as Fekete points or Chebyshev points which can be used as interpolation points (see \cite{Trefethen}). The $\log$-energy problem received another special attention when Steven Smale posed Problem \#7 in his book chapter under the title ``Mathematical problems for the next century" \cite{Smale}. The problem \#7 asks for a construction of an algorithm which on input $N \geq 2$
outputs a configuration $\omega_N = \{x_1, \ldots , x_N\}$ of distinct points on $\mathbb{S}^2$ embedded in $\mathbb{R}^3$ such that
$$E_{\log}(\omega_N) - \mathcal{E}_{\log}(\mathbb{S}^2,N) \leq c \log N$$
(where $c$ is a constant independent of $N$ and $\omega_N$) and requires that its running time grows at most polynomially in $N.$ This arose form complexity theory in his joint work with Shub in \cite{Shub}.  In order to answer this question, it is natural to understand the asymptotic expansion of $\mathcal{E}_{\log}(\mathbb{S}^2,N)$ in the variable $N$ (see \cite{Brauchart} for conjectures and the progress).  The problem concerning the arrangements of $\omega_N^{\log}$ on the unit sphere $\mathbb{S}^2$ in $\mathbb{R}^3$ is posed by Whyte \cite{Whyte} in 1952. The Whyte's problem is also attractive and intractable. We refer to \cite{Dragnev} for a glimpse of this problem.

In \cite{SaffHardinBoroda}, Borodachov, Hardin, and Saff investigated asymptotic properties of minimal $N$-point $s$-energy constants and configurations for fixed $N$ and varying $s.$ Because this will be our main interest in this paper, we will state these results below.

The first theorem \cite[Theorems 2.7.1 and Theorem 2.7.3]{SaffHardinBoroda} concerns the continuity and differentiability of the function 
\begin{equation}\label{eq1}
f(s):=\mathcal{E}^s(A,N),\quad s\geq 0.
\end{equation}
In order to state such theorem, let us define a set
\begin{equation}\label{Gfunction}
G^s_{\log}(A,N):=\bigg\{\sum_{\substack{ x\not=y  \\ x,y \in\omega_N}}K^s(x,y)K_{\log}(x,y): \textup{$\omega_N\subset A$ and $E^{s}(\omega_N)=\mathcal{E}^s(A,N)$}\bigg\},
\end{equation}
for $s\geq 0$

\noindent \textbf{Theorem A.} \emph{
Let $(A,d)$ be an infinite compact metric space and let $N\geq 2$ be fixed.
Then, 
\begin{enumerate}
\item [(a)] the function $f(s)$ defined in \eqref{eq1} is continuous on $[0,\infty).$
\item [(b)] the function $f(s)$ is right differentiable on $[0,\infty)$ and left differentiable on $(0,\infty)$ with 
$$f'_{+}(s):=\lim_{r\rightarrow s^{+}} \frac{f(r)-f(s)}{r-s}=\inf G^{s}(A,N),\quad s\geq 0,$$
and
$$f'_{-}(s):=\lim_{r\rightarrow s^-} \frac{f(r)-f(s)}{r-s}=\sup G^s(A,N),\quad s>0.$$
\end{enumerate}
}

We will see in Theorems B and C below that there are certain relations between minimal $s$-energy problems, as $s\rightarrow 0^{+},$ and best-packing problem defined as follows. The \emph{$N$-point best-packing distance of the set $A$} is defined
\begin{equation}\label{eqjsdfn}
\delta_N(A):=\max \{\delta(\omega_N): \omega_N \subset A \},
\end{equation}
where 
$$\delta(\omega_N):=\min_{1\leq i\not=j\leq N} d(x_i,x_j)$$
denotes the \emph{separation distance of an $N$-point configuration} $\omega_N = \{x_1, \ldots , x_N\},$
and \emph{$N$-point best-packing configurations} are $N$-point configurations attaining the maximum in \eqref{eqjsdfn}.

The following theorem  \cite[Corollary 2.7.5 and Proposition 3.1.2]{SaffHardinBoroda} explains the behavior of $\mathcal{E}^s(A,N)$ as $s\rightarrow 0^{+}$ and $s\rightarrow \infty$. 

\noindent \textbf{Theorem B.} \emph{
For $N\geq 2$ and an infinite compact metric space $(A,d),$
$$\lim_{s\rightarrow 0^{+}} \frac{\mathcal{E}^s(A,N)-N(N-1)}{s}=\mathcal{E}_{\log}(A,N)$$
and
$$\lim_{s \rightarrow \infty} \left(\mathcal{E}^s(A,N)\right)^{1/s}=\frac{1}{\delta_N(A)}.$$
}

Before we state more results, let us define a cluster configuration.
Let $s_0\in [0,\infty]$ We say that 
\begin{itemize}
\item an $N$-point configuration $\omega_N\subset A$ is a \emph{cluster configuration of $\omega_N^{s}$ as $s\rightarrow s_0^{+}$} if there is a sequence $\{s_k\}_{k=1}^{\infty}\subset (s_0,\infty)$ such that $\displaystyle\lim_{k \rightarrow \infty} s_k = s_0$  and $\displaystyle\lim_{k \rightarrow \infty}\omega_N^{s_k}=\omega_N$ in the topology of $A^N$ induced by the metric $d$.
\item an $N$-point configuration $\omega_N\subset A$ is a \emph{cluster configuration of $\omega_N^{s}$ as $s\rightarrow s_0^{-}$}  if there is a sequence $\{s_k\}_{k=1}^{\infty}\subset [0,s_0)$ such that $\displaystyle\lim_{k \rightarrow \infty} s_k = s_0$   and $\displaystyle\lim_{k \rightarrow \infty}\omega_N^{s_k}=\omega_N$ in the topology of $A^N$ induced by the metric $d$.
\item an $N$-point configuration $\omega_N\subset A$ is a \emph{cluster configuration of $\omega_N^{s}$ as $s\rightarrow s_0$}  if there is a sequence $\{s_k\}_{k=1}^{\infty}\subset [0,\infty)$ such that $\displaystyle\lim_{k \rightarrow \infty} s_k = s_0$  and $\displaystyle\lim_{k \rightarrow \infty}\omega_N^{s_k}=\omega_N$ in the topology of $A^N$ induced by the metric $d$.
\end{itemize}

The properties of cluster configurations of minimal $N$-point $s$-energy configurations as $s$ varies (see \cite[Theorem 2.7.1 and Proposition 3.1.2]{SaffHardinBoroda}) are in

\noindent \textbf{Theorem C.} \emph{
Let $(A,d)$ be an infinite compact metric space and, for $s\geq 0$ and $N\geq 2,$ let $\omega_N^s$ denote a minimal $N$-point $s$-energy configuration on $A.$ Then,
\begin{enumerate}
\item [(a)] for $s_0>0,$ any cluster configuration of $\omega_N^{s}$ as $s\rightarrow s_0$ is a minimal $N$-point $s_0$-energy configuration; 
\item [(b)] any cluster configuration of $\omega_N^{s}$ as $s\rightarrow 0^{+}$ is a minimal $N$-point $\log$-energy configuration; 
\item [(c)] any cluster configuration of $\omega_N^{s}$ as $s\rightarrow \infty$ is a $N$-point best-packing configuration.
\end{enumerate}
 }


%

In this paper, we consider the following \emph{$s,\log^t$-kernel}
\begin{equation}\label{slogker}
K^s_{\log^t}(x,y)=\frac{1}{d(x,y)^s}\left(\log  \frac{1}{d(x,y)}\right)^t,\quad s\geq 0,\quad t\geq 0.
\end{equation}
with corresponding \emph{$s,\log^t$-energy of $\omega_N$} and \emph{minimal $N$-point $s,\log^t$-energy of the set $A$}
$$E^s_{\log^t}(\omega_N):=E_{K^s_{\log^t}}(\omega_N)\quad \textup{and} \quad \quad \mathcal{E}^s_{\log^t}(A,N):=\min_{\substack{\omega_N \subset A  \\ \#\omega_N=N}} E^s_{\log^t}(\omega_N),$$
respectively.
We set
$$\omega_N^{s,\log^t}:=\omega_N^{K_{s,\log^t}},$$
and call it a \emph{minimal $N$-point $s,\log^t$-energy configuration}. Note that the kernel $K^s_{\log^t}(x,y)$ is lower semicontinuous on $A\times A$ and this $s,\log^t$-energy can be viewed as a generalization of both $s$-energy and $\log$-energy. The kernel in \eqref{slogker} was first appeared in the study of the differentiability of the function $f(s)$ in \cite[Theorem 2.7.3]{SaffHardinBoroda}. To the authors' knowledge, no study involving $s,\log^t$-energy constants and configurations appears in the literature.

The main goal of this paper is to prove analogues of Theorems A, B, and C for $s,\log^t$-energy constants and configurations. We would like to emphasize that we will limit our interest to the sets $A$ with $\textup{diam}(A)<1,$ where 
$$\textup{diam}(A):=\sup_{x,y\in A} d(x,y)$$
denotes the diameter of $A.$
For the cases where $\textup{diam}(A)\geq 1,$ the values of the kernel $K^s_{\log^t}(x,y)$ can be $0$ or negative and the analysis becomes laborious. 
Furthermore, we investigate the arrangement of $\omega_N^{s,\log^t}$ on circles in $\mathbb{R}^2$ for certain values of $s$ and $t.$ 

An outline of this paper is as follows. The main results in this paper are stated
in Section \ref{mainresults1}. We keep all auxiliary lemmas in Section \ref{123}.
The proofs of the main results are in Section \ref{ProofofEV}.

\section{Main Results}\label{mainresults1} 

Asymptotic behavior of minimal $N$-point $s,\log^t$-energy constants and configurations as $s\rightarrow\infty$ can be explained in the following theorem.
\begin{thm}\label{The2}
Let $N \geq 2$ and $t \geq 0$ be fixed. Assume that $(A,d)$ is an infinite compact metric space with $\textup{diam}(A)<1$. 
Then, 
\begin{equation*}
\lim_{s \to \infty} \left( \mathcal{E}^{s}_{\log^t} (A,N) \right)^{1/s}=\frac{1}{\delta_N(A)}.
\end{equation*}
Furthermore, every cluster configuration of $\omega_N^{s,\log^t}$ as $s \to \infty$ is an $N$-point best-packing configuration on $A$.
\end{thm}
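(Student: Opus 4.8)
The plan is to adapt the strategy behind Theorem B (second limit) and Theorem C(c) to the present kernel, where the only genuinely new feature is the bounded factor $(\log(1/d(x,y)))^t$. The hypothesis $\textup{diam}(A)<1$ guarantees that $\log(1/d(x,y))>0$ on every pair, and in fact that this logarithmic factor stays trapped between two positive constants on any configuration we care about; after extracting an $s$-th root such a factor contributes a multiplicative term tending to $1$ and therefore disappears in the limit. Writing $L(r):=\log(1/r)$ and $L_0:=\log(1/\delta_N(A))$, note $L_0>0$ since $\delta_N(A)\leq\textup{diam}(A)<1$.

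First I would establish matching bounds for $\mathcal{E}^s_{\log^t}(A,N)$. For the upper bound, evaluate the energy on a best-packing configuration $\omega_N^{*}$ (which exists since $\delta(\cdot)$ is continuous on the compact space $A^N$): every pairwise distance lies in $[\delta_N(A),\textup{diam}(A)]$, so $L(d(x,y))\leq L_0$ and $d(x,y)^{-s}\leq\delta_N(A)^{-s}$, whence each of the $N(N-1)$ summands is at most $L_0^t/\delta_N(A)^s$ and $\mathcal{E}^s_{\log^t}(A,N)\leq N(N-1)\,L_0^t/\delta_N(A)^s$. For the lower bound, for any configuration $\omega_N$ the energy dominates its single largest term $L(\delta(\omega_N))^t/\delta(\omega_N)^s$; since the map $\delta\mapsto L(\delta)^t/\delta^s$ is decreasing on $(0,1)$ and $\delta(\omega_N)\leq\delta_N(A)$, this gives $\mathcal{E}^s_{\log^t}(A,N)\geq L_0^t/\delta_N(A)^s$. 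Taking $s$-th roots and letting $s\to\infty$, the factors $(N(N-1))^{1/s}$ and $L_0^{t/s}$ both tend to $1$, so the sandwich yields the limit $1/\delta_N(A)$.

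For the cluster-configuration assertion, let $\omega_N^{s_k,\log^t}\to\omega_N$ along some $s_k\to\infty$ and set $\delta_k:=\delta(\omega_N^{s_k,\log^t})$. Since $\delta(\cdot)$ is continuous on $A^N$, $\delta_k\to\delta(\omega_N)=:\delta_*$, and $\delta_k\leq\delta_N(A)$ throughout. Playing the lower bound $(\mathcal{E}^{s_k}_{\log^t}(A,N))^{1/s_k}\geq L(\delta_k)^{t/s_k}/\delta_k$ against the now-known limit $1/\delta_N(A)$ of the left-hand side forces $\delta_*=\delta_N(A)$: if $\delta_*>0$ then $L(\delta_k)^{t/s_k}\to 1$, so the right side tends to $1/\delta_*$ and $1/\delta_N(A)\geq 1/\delta_*$, giving $\delta_*\geq\delta_N(A)$ and hence equality. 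Continuity of $\delta$ then yields $\delta(\omega_N)=\delta_N(A)$, i.e. $\omega_N$ is best-packing.

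The only step demanding real care — the main obstacle — is the degenerate possibility $\delta_*=0$, in which points of the limit configuration coalesce. I would rule this out using the same lower bound: because $\delta_k\leq\delta_N(A)<1$ forces $L(\delta_k)\geq L_0$, the right-hand side is at least $L_0^{t/s_k}/\delta_k$, and the blow-up of $1/\delta_k$ cannot be tempered by the slowly varying factor $L_0^{t/s_k}\to 1$, so the right side diverges while $1/\delta_N(A)$ is finite — a contradiction. It is exactly here that $\textup{diam}(A)<1$ is indispensable: it keeps $L_0>0$ and every logarithmic factor positive, so that raising it to the vanishing power $t/s$ produces a harmless contribution tending to $1$ rather than an indeterminate or sign-changing one.
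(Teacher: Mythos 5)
Your proof is correct and follows essentially the same route as the paper's: sandwich $\mathcal{E}^s_{\log^t}(A,N)$ between the single term at the separation distance (below) and the energy of a best-packing configuration (above), take $s$-th roots so that the positive logarithmic factors raised to the power $t/s$ tend to $1$, and then run the same bounds along $s_k \to \infty$ together with continuity of $\delta(\cdot)$ to get the cluster statement. The only differences are cosmetic: you use monotonicity of $x^{-s}(\log (1/x))^t$ to decouple the lower bound from the minimal configuration's separation, and you treat the coalescence case $\delta_*=0$ explicitly, which the paper handles implicitly through the same inequality.
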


For a fixed $t\geq 0,$ we define
$$g(s):=\mathcal{E}^s_{\log^t} (A,N),\quad \quad s\geq 0.$$
The continuity of $g(s)$ is stated below.
\begin{thm}\label{contE}
Let $N \geq 2$ and $t \geq 0$ be fixed. Assume that $(A,d)$ is an infinite compact metric space with $\textup{diam}(A)<1.$  Then, the function $g(s)$ is continuous on $[0,\infty)$.
\end{thm}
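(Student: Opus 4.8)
The plan is to prove continuity of $g(s) = \mathcal{E}^s_{\log^t}(A,N)$ on $[0,\infty)$ by establishing continuity at each point $s_0 \geq 0$. The core idea is to exploit the fact that $\mathcal{E}^s_{\log^t}(A,N)$ is a minimum over configurations of a function that depends continuously (indeed, real-analytically) on $s$ for each fixed configuration, so I would try to show that $g$ is both an upper and a lower semicontinuous function of $s$, which together give continuity.

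First I would fix $s_0 \geq 0$ and a minimal $N$-point $s_0,\log^t$-energy configuration $\omega_N^{s_0,\log^t}$ attaining $g(s_0)$. Since $\mathrm{diam}(A) < 1$, for any pair $x \neq y$ in $A$ we have $d(x,y) < 1$, hence $\log\frac{1}{d(x,y)} > 0$ and each kernel value $K^s_{\log^t}(x,y) = d(x,y)^{-s}(\log \frac{1}{d(x,y)})^t$ is positive; moreover, for a fixed configuration with all pairwise distances bounded below by its separation distance $\delta(\omega_N) > 0$, the map $s \mapsto E^s_{\log^t}(\omega_N)$ is continuous (each summand is a continuous, in fact monotone, function of $s$ since $d(x,y)^{-s} = e^{-s \log d(x,y)}$). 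Upper semicontinuity of $g$ at $s_0$ then follows by a standard test-configuration argument: using the fixed optimizer $\omega_N^{s_0,\log^t}$ as a competitor for nearby $s$ gives
$$ g(s) \leq E^s_{\log^t}\big(\omega_N^{s_0,\log^t}\big) \xrightarrow{s \to s_0} E^{s_0}_{\log^t}\big(\omega_N^{s_0,\log^t}\big) = g(s_0), $$
so $\limsup_{s\to s_0} g(s) \leq g(s_0)$.

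For lower semicontinuity I would take a sequence $s_k \to s_0$ together with minimizers $\omega_N^{s_k,\log^t}$, and pass to the limit. By compactness of $A^N$ I can extract a subsequence along which the configurations converge to some limit configuration $\widehat{\omega}_N$. The goal is to show $\liminf_k g(s_k) \geq g(s_0)$. The main obstacle, and the step requiring the most care, is the possible collision of points in the limit: if two points of $\widehat{\omega}_N$ coincide, the kernel blows up and one must argue that such collapsing configurations cannot have bounded-below-optimal energy, or otherwise control the limiting behavior of the energy functional under the convergence $\omega_N^{s_k,\log^t} \to \widehat{\omega}_N$. I expect to handle this either by invoking lower semicontinuity of the kernel $K^s_{\log^t}$ directly (the kernel is lower semicontinuous on $A \times A$, as noted in the text), which forces $\liminf_k E^{s_k}_{\log^t}(\omega_N^{s_k,\log^t}) \geq E^{s_0}_{\log^t}(\widehat{\omega}_N) \geq g(s_0)$, or by a uniform separation estimate showing that near-minimizers cannot have vanishing separation distance.

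I anticipate that the auxiliary lemmas collected in Section~\ref{123} will supply exactly the uniform bounds needed to justify the interchange of limit and energy — in particular a uniform lower bound on the separation distance of near-optimal configurations, or equicontinuity of the family $\{s \mapsto E^s_{\log^t}(\omega_N)\}$ over configurations. The delicate point distinguishing this from the pure $s$-energy case (Theorem A(a)) is the extra factor $(\log\frac{1}{d})^t$, which vanishes as $d \to 1^-$; but the hypothesis $\mathrm{diam}(A) < 1$ keeps $\log\frac{1}{d(x,y)}$ bounded away from $0$ uniformly, so the logarithmic factor stays comparable to a positive constant and does not destroy the positivity or the monotonicity structure that makes the $s$-energy argument work. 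Once both semicontinuity directions are in hand, continuity of $g$ at $s_0$ follows immediately, and since $s_0 \geq 0$ was arbitrary, $g$ is continuous on $[0,\infty)$.
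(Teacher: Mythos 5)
Your argument is correct, but it follows a genuinely different route from the paper's. You prove continuity ``softly'': upper semicontinuity by testing the energy at nearby $s$ with the fixed optimizer $\omega_N^{s_0,\log^t}$, and lower semicontinuity by compactness of $A^N$ plus lower semicontinuity of the kernel, with collisions in the limit handled automatically because the $N$-tuple energy of a collided configuration is $+\infty$, so the bound $E^{s_0}_{\log^t}(\widehat{\omega}_N)\geq g(s_0)$ holds in every case. Two points in your sketch need tightening: (i) since the energies along your sequence are $E^{s_k}_{\log^t}$ with \emph{varying} exponent, you need lower semicontinuity of $(s,x,y)\mapsto K^s_{\log^t}(x,y)$ jointly in all three variables, not merely lower semicontinuity on $A\times A$ for each fixed $s$, which is the fact you cite from the text; the joint version is true and easy (off the diagonal the kernel is jointly continuous, while near the diagonal $d<1$ gives $d^{-s_k}\geq 1$, so the factor $(\log\frac{1}{d})^t$ blows up when $t>0$, and when $t=0$, $s_0>0$ one uses $d^{-s_k}\geq d^{-s_0/2}$ eventually), but it is a step you must actually verify; (ii) the degenerate case $s_0=0$, $t=0$ should be dispatched separately, the kernel then being constant. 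The paper instead proves a quantitative statement: using the sandwich of Lemma \ref{ineqE}, namely $E^r_{\log^{t+1}}(\omega_N)\leq \frac{E^s_{\log^t}(\omega_N)-E^r_{\log^t}(\omega_N)}{s-r}\leq E^s_{\log^{t+1}}(\omega_N)$, together with uniform lower bounds on the separation distance of minimizers obtained from Lemmas \ref{inc}, \ref{decre}, and \ref{BoundedBel}, it traps the one-sided difference quotients of $g$ between positive finite constants, i.e., a local Lipschitz-type estimate, from which continuity follows. That extra work is not wasted: the resulting inequalities \eqref{inftneg}, \eqref{suptneg}, \eqref{suptpos}, \eqref{inftpos} are precisely what the paper reuses to compute $g'_{+}$ and $g'_{-}$ in Theorem \ref{tptn} and to prove Theorem \ref{diff}. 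In short, your approach buys brevity and minimal machinery for continuity alone; the paper's approach buys the derivative bounds it needs later.
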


Analysis of cluster configurations of $\omega_N^{s,\log^t}$ as $s\rightarrow s_0>0$ is in the following theorem.
\begin{thm}\label{clusters0}
Let $N \geq 2$ and $t \geq 0$ be fixed. Assume that $(A,d)$ is an infinite compact metric space with $\textup{diam}(A)<1$. 
Denote by $\omega_N^{s,\log^{t}}$ a minimal $N$-point $s,\log^{t}$-energy configuration on $A$. Then, for any $s_{0}>0,$ any cluster configuration of $\omega_N^{s,\log^{t}},$ as $s \to s_0,$ is a minimal $N$-point $s_0,\log^{t}$-energy configuration on $A$.
\end{thm}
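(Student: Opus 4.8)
The plan is to run the standard \emph{limit-of-minimizers} argument: I will trap the $s_0,\log^{t}$-energy of the cluster configuration between a lower bound coming from (joint) lower semicontinuity of the kernel and an upper bound coming from the continuity of $g$ established in Theorem~\ref{contE}. Fix $s_0>0$ and let $\omega_N=(x_1,\dots,x_N)$ be a cluster configuration of $\omega_N^{s,\log^t}$ as $s\to s_0$; by definition there is a sequence $s_k\to s_0$ in $[0,\infty)$ and configurations $\omega_N^{s_k,\log^t}=(x_1^{(k)},\dots,x_N^{(k)})$ with $\omega_N^{s_k,\log^t}\to\omega_N$ in $A^N$. Since $s_0>0$, after discarding finitely many indices I may assume $s_k\geq s_0/2>0$ for all $k$, so the exponents stay bounded away from $0$.

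First I would record the analytic fact I need about the kernel. Because $\textup{diam}(A)<1$ we have $d(x,y)<1$, hence $\log\big(1/d(x,y)\big)>0$ and $K^{s}_{\log^{t}}\geq 0$ on $A\times A$. Moreover $(s,x,y)\mapsto K^{s}_{\log^{t}}(x,y)$ is continuous off the diagonal and tends to $+\infty$ as $d(x,y)\to 0$ whenever the exponent is bounded away from $0$. From this one obtains the joint lower semicontinuity estimate
\[
\liminf_{k\to\infty}K^{s_k}_{\log^{t}}\big(x_i^{(k)},x_j^{(k)}\big)\ \geq\ K^{s_0}_{\log^{t}}(x_i,x_j),\qquad i\neq j,
\]
which holds when $x_i\neq x_j$ (there it is equality, by continuity) and also when $x_i=x_j$ (there both sides are $+\infty$, using $s_k\geq s_0/2$). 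I expect a lemma of this type to be among the auxiliary results of Section~\ref{123}.

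Summing over the finitely many pairs $i\neq j$ and using superadditivity of $\liminf$ gives
\[
E^{s_0}_{\log^{t}}(\omega_N)\ \leq\ \liminf_{k\to\infty}E^{s_k}_{\log^{t}}\big(\omega_N^{s_k,\log^{t}}\big)=\liminf_{k\to\infty} g(s_k).
\]
By Theorem~\ref{contE} the function $g$ is continuous, so $g(s_k)\to g(s_0)=\mathcal{E}^{s_0}_{\log^{t}}(A,N)$, which is finite because $A$ is infinite and $\textup{diam}(A)<1$. Hence $E^{s_0}_{\log^{t}}(\omega_N)\leq\mathcal{E}^{s_0}_{\log^{t}}(A,N)<\infty$. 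Finiteness now forces $x_1,\dots,x_N$ to be pairwise distinct (a collision would make one summand $+\infty$), so $\omega_N$ is an admissible $N$-point configuration and therefore $E^{s_0}_{\log^{t}}(\omega_N)\geq\mathcal{E}^{s_0}_{\log^{t}}(A,N)$ by definition of the minimum. The two inequalities force equality, i.e.\ $\omega_N$ is a minimal $N$-point $s_0,\log^{t}$-energy configuration.

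The only genuinely delicate point is the joint lower semicontinuity of the kernel as $s$ varies together with the points; everything else is bookkeeping once the continuity of $g$ from Theorem~\ref{contE} is in hand. In particular, the hypothesis $s_0>0$ is used precisely to keep the exponents $s_k$ away from $0$, which is what makes the kernel blow up at colliding points and thereby rules out degenerate cluster configurations.
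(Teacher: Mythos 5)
Your proof is correct, and it takes a genuinely different route from the paper's. The paper splits into the two one-sided cases and uses a monotonicity trick for each: for $s\to s_0^{+}$ it observes that $\alpha^{s}E^{s}_{\log^{t}}(\omega_N)$, with $\alpha:=\textup{diam}(A)$, is increasing in $s$ (since $\alpha/d(x,y)\geq 1$), while for $s\to s_0^{-}$ it observes that $\delta(\omega_N)^{s}E^{s}_{\log^{t}}(\omega_N)$ is decreasing in $s$; the left-sided case moreover requires first deriving, from an upper bound $M$ on $g$ near $s_0$ together with Lemma \ref{inc}, a uniform separation bound $\delta(\omega_N^{s_k,\log^{t}})\geq c>0$, which is what guarantees that the limit configuration has distinct points and that $E^{s_0}_{\log^{t}}$ converges along the sequence of configurations. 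Both the paper and you then finish with the continuity of $g$ from Theorem \ref{contE}. Your argument replaces the two normalizations and the separation estimate with a single joint lower-semicontinuity inequality for $(s,x,y)\mapsto K^{s}_{\log^{t}}(x,y)$, which treats both one-sided limits simultaneously, and you rule out collisions a posteriori from finiteness of $\mathcal{E}^{s_0}_{\log^{t}}(A,N)$ rather than a priori from a separation bound. Be aware that no such semicontinuity lemma appears in Section \ref{123}, so you must supply it yourself; your sketch (continuity of $(s,u)\mapsto u^{-s}\left(\log(1/u)\right)^{t}$ off the diagonal, blow-up on the diagonal using $s_k\geq s_0/2>0$ and $d<1$, then superadditivity of $\liminf$ over the finitely many nonnegative terms) is complete, so this is not a gap. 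As for what each approach buys: yours is shorter, unified across the two one-sided limits, and is the standard limit-of-minimizers argument; the paper's is more elementary in that it never needs semicontinuity in the joint variables, and the uniform separation bound it establishes is not wasted effort --- essentially the same estimate is reused in the proof of Theorem \ref{tptn} to show that $\sup G^{s}_{\log^{t+1}}(A,N)$ is finite.
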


For $s\geq 0$ and $t\geq 0,$ we set
$$G^s_{\log^{t+1}}(A,N):=\{E^{s}_{\log^{t+1}}(\omega_N): \textup{$\omega_N\subset A$ \textup{and} $E^{s}_{\log^{t}}(\omega_N)=\mathcal{E}^s_{\log^t}(A,N)$}\}.$$

The differentiability properties of $g(s)$ are in Theorems \ref{tptn} and \ref{diff}.

\begin{thm}\label{tptn}
Let  $N \geq 2$ and $t\geq 0$ be fixed. Assume that $(A,d)$ is an infinite compact metric space with $\textup{diam}(A)<1.$ Then, the function $g(s)$ is right differentiable on $[0,\infty)$ and left differentiable on $(0,\infty)$ with 
\begin{equation}\label{tptn1}
g'_{+}(s):=\lim_{r\rightarrow s^{+}} \frac{g(r)-g(s)}{r-s}=\inf G^{s}_{\log^{t+1}}(A,N),\quad s\geq 0,
\end{equation}
and
\begin{equation}\label{tptn2}
g'_{-}(s):=\lim_{r\rightarrow s^-} \frac{g(r)-g(s)}{r-s}=\sup G^s_{\log^{t+1}}(A,N),\quad s>0.
\end{equation}
\end{thm}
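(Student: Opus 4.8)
The plan is to build on the identity obtained by differentiating the kernel \eqref{slogker} in $s$. Writing $L(x,y):=\log\frac{1}{d(x,y)}$, which is strictly positive whenever $x\neq y$ because $\textup{diam}(A)<1$, and using $d(x,y)^{-s}=e^{sL(x,y)}$, one computes
$$\frac{\partial}{\partial s}K^s_{\log^t}(x,y)=L(x,y)\,e^{sL(x,y)}L(x,y)^t=K^s_{\log^{t+1}}(x,y).$$
Hence for every fixed configuration $\omega_N$ of $N$ distinct points the map $s\mapsto E^s_{\log^t}(\omega_N)$ is differentiable with $\frac{d}{ds}E^s_{\log^t}(\omega_N)=E^s_{\log^{t+1}}(\omega_N)$. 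This is the structural fact that makes the $\log^{t+1}$-energy appear in the derivative, and it is the exact analogue of the computation underlying Theorem A(b).

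For the right derivative I would prove the inequalities $g'_+(s)\le\inf G^s_{\log^{t+1}}(A,N)$ and $g'_+(s)\ge\inf G^s_{\log^{t+1}}(A,N)$ separately. For the first, fix any minimal $s,\log^t$-configuration $\omega_N^{\ast}$; from $g(r)\le E^r_{\log^t}(\omega_N^{\ast})$ and $g(s)=E^s_{\log^t}(\omega_N^{\ast})$ we get, for $r>s$,
$$\frac{g(r)-g(s)}{r-s}\le\frac{E^r_{\log^t}(\omega_N^{\ast})-E^s_{\log^t}(\omega_N^{\ast})}{r-s},$$
whose right-hand side tends to $E^s_{\log^{t+1}}(\omega_N^{\ast})$ as $r\to s^+$ by the identity above; taking the infimum over all such $\omega_N^{\ast}$ gives $\limsup_{r\to s^+}\le\inf G^s_{\log^{t+1}}(A,N)$. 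For the reverse inequality, choose $r_k\downarrow s$ realizing the liminf of the difference quotient and let $\omega_N^{(k)}$ be minimal at $r_k$. Using $g(s)\le E^s_{\log^t}(\omega_N^{(k)})$ together with the mean value theorem applied to $s\mapsto E^s_{\log^t}(\omega_N^{(k)})$, there is $\xi_k\in(s,r_k)$ with
$$\frac{g(r_k)-g(s)}{r_k-s}\ge\frac{E^{r_k}_{\log^t}(\omega_N^{(k)})-E^s_{\log^t}(\omega_N^{(k)})}{r_k-s}=E^{\xi_k}_{\log^{t+1}}(\omega_N^{(k)}).$$
Passing to a subsequence along which $\omega_N^{(k)}\to\omega_N^{0}$ in $A^N$ (by compactness), Theorem \ref{clusters0} identifies $\omega_N^{0}$ as a minimal $s,\log^t$-configuration, so the right-hand side converges to $E^s_{\log^{t+1}}(\omega_N^{0})\ge\inf G^s_{\log^{t+1}}(A,N)$. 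Together these yield \eqref{tptn1}. The left derivative is handled by the mirror argument: since division by $r-s<0$ reverses each inequality, the fixed-minimizer estimate now produces $\liminf_{r\to s^-}\ge\sup G^s_{\log^{t+1}}(A,N)$, while the mean-value/compactness estimate produces $\limsup_{r\to s^-}\le\sup G^s_{\log^{t+1}}(A,N)$, giving \eqref{tptn2}.

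The main obstacle, common to both directions, is justifying the convergence $E^{\xi_k}_{\log^{t+1}}(\omega_N^{(k)})\to E^s_{\log^{t+1}}(\omega_N^{0})$, that is, the joint continuity of $(s,\omega_N)\mapsto E^s_{\log^{t+1}}(\omega_N)$ at $(s,\omega_N^{0})$, since the kernel is singular on the diagonal. The key is that every minimizer must consist of $N$ distinct points: the minimal $s,\log^t$-energy is finite because $A$ is infinite (so there exist $N$ distinct points of finite energy), which forces any configuration with a coincidence, having infinite energy, to be non-minimal. The set of minimizers is a closed, hence compact, subset of $A^N$ consisting of such configurations, so their separation distances are bounded below by some $\rho>0$; on the region $\{(x,y):d(x,y)\ge\rho\}$ the kernel $K^s_{\log^{t+1}}$ is jointly continuous in $(x,y)$ and in $s$, which delivers both the required convergence and the boundedness of $G^s_{\log^{t+1}}(A,N)$ that makes its infimum and supremum finite.
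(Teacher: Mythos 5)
Your proof is correct and shares the paper's overall skeleton: one bound comes from testing $g$ against a fixed minimizer at $s$, the other from minimizers at nearby exponents together with compactness of $A^N$ and Theorem \ref{clusters0}, and the two bounds sandwich the one-sided derivative against $\inf G^{s}_{\log^{t+1}}(A,N)$ (resp. $\sup G^{s}_{\log^{t+1}}(A,N)$). The implementation differs in two ways, though. First, where you differentiate $s\mapsto E^{s}_{\log^{t}}(\omega_N)$ exactly and apply the mean value theorem, the paper uses its Lemma \ref{ineqE}, which traps the difference quotient between $E^{r}_{\log^{t+1}}(\omega_N)$ and $E^{s}_{\log^{t+1}}(\omega_N)$ evaluated at the two \emph{endpoint} exponents; this keeps every energy at the exponents $r$ or $s$, so the paper only needs continuity of $\omega\mapsto E^{s}_{\log^{t+1}}(\omega)$ near a configuration of distinct points, whereas your intermediate points $\xi_k$ force the joint continuity in $(s,\omega)$ that you rightly single out as the crux. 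Second, your justification of the needed separation bound has a small slip: the configurations $\omega_N^{(k)}$ are minimizers at the \emph{varying} exponents $r_k$, so compactness of the set of minimizers at one fixed exponent does not cover them; the quickest repair is to note that along your subsequence $\omega_N^{(k)}\to\omega_N^{0}$, where $\omega_N^{0}$ has distinct points by Theorem \ref{clusters0}, so $\delta(\omega_N^{(k)})\to\delta(\omega_N^{0})>0$ by continuity of the separation distance on $A^N$ (the paper instead derives uniform separation bounds by explicit estimates via Lemmas \ref{inc} and \ref{decre}). Your soft compactness argument is correctly applied at the fixed exponent $s$, where it yields finiteness of $\sup G^{s}_{\log^{t+1}}(A,N)$ somewhat more cleanly than the paper's computation. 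Finally, both you and the paper invoke Theorem \ref{clusters0} for the right derivative at $s=0$ although that theorem is stated only for $s_0>0$; the case $s_0=0$ needs (and admits) the same monotonicity argument used in its proof, so this is a shared, easily repaired omission rather than one you introduced.
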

\begin{thm}\label{diff} Let  $N \geq 2$ and $t\geq 0$ be fixed. Assume that $(A,d)$ is an infinite compact metric space with $\textup{diam}(A)<1.$ Then,
\begin{enumerate}
\item [(a)]
 the function $g(s)$ is differentiable at $s=s_0>0$ if and only if 
$$\inf G^{s_0}_{\log^t}(A,N) = \sup G^{s_0}_{\log^t}(A,N);$$
\item [(b)] if $\omega_N^{*}$ is a cluster point of $ \omega_N^{s,\log^{t}} $ as $s\rightarrow s_{0}^{+}\geq 0$, then
$$ E^{s_0}_{\log^{t+1}}(\omega_N^{*})=\inf G^{s_0}_{\log^{t+1}}(A,N)=g'_{+}(s_0);$$
\item [(c)] if $\omega_N^{**}$ is a cluster point of $ \omega_N^{s,\log^{t}} $ as $s\rightarrow s_{0}^{-}> 0$, then
$$ E^{s_0}_{\log^{t+1}}(\omega_N^{**})=\sup G^{s_0}_{\log^{t+1}}(A,N)=g'_{-}(s_0);$$
\item [(d)] for $s_0>0,$ if there exists a configuration $\omega_N^{*}$ that is both cluster configurations of  $ \omega_N^{s,\log^{t}} $ as $s\rightarrow s_{0}^{+}$ and $s\rightarrow s_{0}^{-},$ then the function $g(s)$ is differentiable at $s=s_0$
with 
$$E^{s_0}_{\log^{t+1}}(\omega_N^{*})=g'(s_0).$$
\end{enumerate}
\end{thm}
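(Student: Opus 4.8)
The plan is to deduce the entire statement from Theorem \ref{tptn}, using as the single analytic ingredient the elementary identity that, for any fixed $N$-point configuration $\omega_N$ consisting of distinct points,
\[
\frac{d}{ds}\,E^s_{\log^t}(\omega_N)=E^s_{\log^{t+1}}(\omega_N),
\]
which follows by differentiating each term $d(x,y)^{-s}\left(\log(1/d(x,y))\right)^t$ of the finite sum with respect to $s$. In this picture parts (a) and (d) are essentially formal: by Theorem \ref{tptn} we have $g'_{+}(s_0)=\inf G^{s_0}_{\log^{t+1}}(A,N)$ and $g'_{-}(s_0)=\sup G^{s_0}_{\log^{t+1}}(A,N)$, so $g$ is differentiable at $s_0>0$ exactly when these two quantities coincide, which is (a) (the set in the statement being read as $G^{s_0}_{\log^{t+1}}(A,N)$); and (d) is immediate once (b) and (c) are proved, since they force $g'_{+}(s_0)=E^{s_0}_{\log^{t+1}}(\omega_N^{*})=g'_{-}(s_0)$. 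Thus the real content is (b) and (c), i.e.\ identifying the $s,\log^{t+1}$-energy of a one-sided cluster configuration with the corresponding one-sided derivative of $g$.

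A preliminary step, needed in both (b) and (c), is to check that a cluster configuration $\omega_N^{*}$ of $\omega_N^{s,\log^t}$ as $s\to s_0$ is itself a minimal $N$-point $s_0,\log^t$-energy configuration. Fix a sequence $s_k\to s_0$ with $\omega_N^{s_k}\to\omega_N^{*}$. Since $\mathrm{diam}(A)<1$ all kernel values are strictly positive, so if two points of $\omega_N^{*}$ coincided the corresponding pairwise distances would tend to $0$ and $E^{s_k}_{\log^t}(\omega_N^{s_k})=g(s_k)$ would diverge, contradicting $g(s_k)\to g(s_0)<\infty$ (finiteness holds because $A$ is infinite). Hence $\omega_N^{*}$ has distinct points, the kernel is jointly continuous near $\omega_N^{*}$, and letting $k\to\infty$ gives $E^{s_0}_{\log^t}(\omega_N^{*})=\lim_k g(s_k)=g(s_0)$ by the continuity of $g$ (Theorem \ref{contE}); for $s_0>0$ this is also Theorem \ref{clusters0}, while the argument just given covers the case $s_0=0$ as well. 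In particular $E^{s_0}_{\log^{t+1}}(\omega_N^{*})\in G^{s_0}_{\log^{t+1}}(A,N)$.

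For part (b), take $s_k\downarrow s_0$ with $\omega_N^{s_k}\to\omega_N^{*}$. Minimality of $\omega_N^{s_k}$ at $s_k$ and of $\omega_N^{s_0}$-type configurations at $s_0$ give $g(s_k)=E^{s_k}_{\log^t}(\omega_N^{s_k})$ and $g(s_0)\le E^{s_0}_{\log^t}(\omega_N^{s_k})$, whence, by the mean value theorem applied to $s\mapsto E^{s}_{\log^t}(\omega_N^{s_k})$,
\[
g(s_k)-g(s_0)\ \ge\ E^{s_k}_{\log^t}(\omega_N^{s_k})-E^{s_0}_{\log^t}(\omega_N^{s_k})\ =\ (s_k-s_0)\,E^{\xi_k}_{\log^{t+1}}(\omega_N^{s_k})
\]
for some $\xi_k$ between $s_0$ and $s_k$. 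Dividing by $s_k-s_0>0$ and letting $k\to\infty$, using that $\xi_k\to s_0$, that $\omega_N^{s_k}\to\omega_N^{*}$ has distinct points (joint continuity of $E^{\cdot}_{\log^{t+1}}$), and that $g'_{+}(s_0)$ exists by Theorem \ref{tptn}, yields $g'_{+}(s_0)\ge E^{s_0}_{\log^{t+1}}(\omega_N^{*})$. Conversely the preliminary step gives $E^{s_0}_{\log^{t+1}}(\omega_N^{*})\in G^{s_0}_{\log^{t+1}}(A,N)$, so $E^{s_0}_{\log^{t+1}}(\omega_N^{*})\ge\inf G^{s_0}_{\log^{t+1}}(A,N)=g'_{+}(s_0)$ by Theorem \ref{tptn}. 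The two inequalities force the claimed equalities. Part (c) is identical with $s_k\uparrow s_0$: the sign of $s_k-s_0$ reverses the difference-quotient inequality, giving $g'_{-}(s_0)\le E^{s_0}_{\log^{t+1}}(\omega_N^{**})$, and $E^{s_0}_{\log^{t+1}}(\omega_N^{**})\le\sup G^{s_0}_{\log^{t+1}}(A,N)=g'_{-}(s_0)$ from membership in $G^{s_0}_{\log^{t+1}}(A,N)$, so equality again holds.

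I expect the main obstacle to be the preliminary step rather than the comparison inequality: one must rule out collisions of points in the limit (so that the energies and the derivative identity remain finite and the kernels are jointly continuous), and, for $s_0=0$ in part (b), one cannot invoke Theorem \ref{clusters0} and must instead run the continuity argument directly from Theorem \ref{contE}. Once distinctness and joint continuity are secured, the passage to the limit in the mean value inequality — in particular justifying $E^{\xi_k}_{\log^{t+1}}(\omega_N^{s_k})\to E^{s_0}_{\log^{t+1}}(\omega_N^{*})$ — is routine, and the two opposing estimates close the argument.
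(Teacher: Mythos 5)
Your overall route is the same as the paper's: parts (a) and (d) are treated as formal consequences of Theorem \ref{tptn} and of (b)--(c), and the heart of (b)--(c) is the comparison of the difference quotient of $g$ with the energy increment of the fixed configuration $\omega_N^{s_k,\log^{t}}$, closed off by playing the membership $E^{s_0}_{\log^{t+1}}(\omega_N^{*})\in G^{s_0}_{\log^{t+1}}(A,N)$ against $g'_{\pm}(s_0)=\inf/\sup\, G^{s_0}_{\log^{t+1}}(A,N)$ from Theorem \ref{tptn}. Your mean-value step is a restatement of the paper's Lemma \ref{ineqE}: since $E^{s}_{\log^{t+1}}(\omega_N)$ is increasing in $s$ (all distances being less than $1$), the intermediate value $E^{\xi_k}_{\log^{t+1}}$ is squeezed between $E^{s_0}_{\log^{t+1}}$ and $E^{s_k}_{\log^{t+1}}$, which is exactly the lemma's two-sided bound. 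Your reading of (a), with $G^{s_0}_{\log^{t}}$ corrected to $G^{s_0}_{\log^{t+1}}$, also agrees with the paper.

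The one genuine flaw is in your preliminary step, and it occurs precisely in the case you introduced that step to handle: $s_0=0$ in part (b) when $t=0$. There you claim that if two points of $\omega_N^{*}$ merged, then $g(s_k)=E^{s_k}_{\log^{t}}(\omega_N^{s_k,\log^{t}})$ would diverge. For $t=0$ this is false: the offending pair contributes $d_k^{-s_k}$ with $d_k\to 0$ and $s_k\to 0^{+}$, and this need not blow up (take $d_k=e^{-1/\sqrt{s_k}}$, so that $d_k^{-s_k}=e^{\sqrt{s_k}}\to 1$); indeed $g(s_k)\to g(0)=N(N-1)<\infty$ holds regardless, so boundedness of $g(s_k)$ rules out nothing. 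The repair stays inside your own framework: run the mean-value inequality first, $\frac{g(s_k)-g(0)}{s_k}\ \ge\ E^{\xi_k}_{\log^{t+1}}(\omega_N^{s_k,\log^{t}})$, and note that the $\log^{t+1}$-kernel $d^{-s}\left(\log(1/d)\right)^{t+1}$ blows up at a collision uniformly in $s\ge 0$, because $d^{-s}\ge 1$ and the logarithmic factor has exponent $t+1\ge 1$; hence a merger would force the right-hand side to $+\infty$, contradicting the finiteness of $g'_{+}(0)=\inf G^{0}_{\log^{t+1}}(A,N)$ established in Theorem \ref{tptn}. (Alternatively, this classical case is Theorem C(b).) It is worth noting that the paper's own proof of (b) simply cites Theorem \ref{clusters0} for all $s_0\ge 0$, although that theorem is stated only for $s_0>0$; your instinct that $s_0=0$ requires separate treatment is therefore sound, and only the divergence claim needs the above fix.
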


 Let $d_u$ be the $2$-dimensional Euclidean metric of $\mathbb{R}^2$. For $\alpha>0,$ we denote by
$$\mathbb{S}^1_{\alpha}:=\{x\in \mathbb{R}^2: d_u(0,x)=\alpha\}$$
the circle centered at $0$ of radius $\alpha.$ 
We let $L(x, y)$ be the geodesic distance between the points $x$ and $y$ on $\mathbb{S}^1_{\alpha}$; that is, the length of the shorter arc of $\mathbb{S}^1_{\alpha}$ connecting the points $x$ and $y.$  

The optimality of $N$ distinct equally spaced points on $\mathbb{S}^1_{\alpha}$ with the Euclidean metric $d_u$ or the geodesic distance $L$ for the certain $s,\log^t$-energy problems is stated in Propositions \ref{Equalspace1}-\ref{Equalspace3}.

\begin{Proposition}\label{Equalspace1} 
Let  $N \geq 2,$ $s\geq 0,$ $t\geq 1,$ and $0<\alpha<\pi^{-1}.$ Then, $\omega_N$ is a minimal $N$-point $s,\log^t$-energy configuration on $\mathbb{S}^1_{\alpha}$ with the geodesic distance $L$ if and only if $\omega_N$ is a configuration of $N$ distinct equally spaced points on $\mathbb{S}^1_{\alpha}$.
\end{Proposition}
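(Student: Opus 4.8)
The plan is to reduce the problem to a convexity property of the radial profile of the kernel and then run a Jensen-type averaging over index gaps around the circle. Write $\ell:=2\pi\alpha$ for the circumference of $\mathbb{S}^1_{\alpha}$ and parametrize points by arc length, so that a configuration becomes $x_1,\dots,x_N\in\mathbb{R}/\ell\mathbb{Z}$ and the geodesic distance of a pair equals $\rho(x_i-x_j)$, where $\rho(u):=\min(|u|_\ell,\ell-|u|_\ell)$ and $|u|_\ell$ denotes the representative in $[0,\ell)$. Setting $f(r):=r^{-s}(\log(1/r))^t$ and $\Phi(u):=f(\rho(u))$, the energy becomes $E^s_{\log^t}(\omega_N)=\sum_{i\ne j}\Phi(x_i-x_j)$. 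Since $\textup{diam}(\mathbb{S}^1_{\alpha},L)=\pi\alpha<1$, every geodesic distance lies in $(0,\pi\alpha]\subset(0,1)$, so $\log(1/r)>0$ and the kernel is positive there; any configuration with a repeated point has infinite energy and is therefore not minimal, so I may assume all points distinct.

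The heart of the argument, and the step where the hypotheses $t\ge1$ and $\alpha<\pi^{-1}$ are genuinely used, is to show that $f$ is strictly decreasing and strictly convex on $(0,1)$. Writing $v:=\log(1/r)>0$, a direct differentiation gives $f'(r)=-r^{-s-1}v^{t-1}(sv+t)<0$ and
$$f''(r)=r^{-s-2}\,v^{t-2}\,Q(v),\qquad Q(v):=s(s+1)v^2+t(2s+1)v+t(t-1).$$
For $s\ge0$ and $t\ge1$ all three coefficients of $Q$ are nonnegative, and the middle one is strictly positive, so $Q(v)>0$ for every $v>0$; hence $f''(r)>0$ on $(0,1)$. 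This is precisely where $t\ge1$ enters, since the constant term $t(t-1)$ turns negative once $0\le t<1$, and where $\alpha<\pi^{-1}$ enters, since it confines every distance to the region $(0,1)$ on which this convexity holds. Pulling back through $\rho$, the circle function $\Phi$ is then strictly convex on the open interval $(0,\ell)$: it is strictly convex on each of $(0,\ell/2)$ and $(\ell/2,\ell)$, and the kink at $u=\ell/2$ is a convex one because $\Phi'(\ell/2^-)=f'(\ell/2)<0<-f'(\ell/2)=\Phi'(\ell/2^+)$, using that $f$ is decreasing.

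With convexity in hand I would finish by the standard index-gap averaging. Order the points cyclically, let $a_1,\dots,a_N$ be the consecutive arc gaps (so $\sum_i a_i=\ell$), and for $m\in\{1,\dots,N-1\}$ set $A_i^{(m)}:=a_i+a_{i+1}+\cdots+a_{i+m-1}\in(0,\ell)$, the forward arc from $x_i$ to $x_{i+m}$ (indices mod $N$), which satisfies $\sum_{i=1}^N A_i^{(m)}=m\ell$. Grouping the ordered pairs by index gap gives $E^s_{\log^t}(\omega_N)=\sum_{m=1}^{N-1}\sum_{i=1}^N\Phi(A_i^{(m)})$, and Jensen's inequality for the convex $\Phi$ yields, for each $m$,
$$\sum_{i=1}^N\Phi\!\left(A_i^{(m)}\right)\ge N\,\Phi\!\left(\tfrac{m\ell}{N}\right),$$
so that $E^s_{\log^t}(\omega_N)\ge N\sum_{m=1}^{N-1}\Phi(m\ell/N)$, which is exactly the energy of the equally spaced configuration. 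Strict convexity forces equality already in the case $m=1$ only when all $a_i$ coincide, i.e. when the points are equally spaced; this simultaneously shows that the equally spaced configuration attains the minimum and that it is the only minimizer (up to rotation), which is the claimed biconditional. The only mild technical point I expect to need is a short auxiliary lemma confirming that a function strictly convex on each half of $(0,\ell)$ with a convex corner at $\ell/2$ still admits equality in Jensen only for coincident arguments, so that the equality analysis across the kink is rigorous.
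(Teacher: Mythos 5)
Your proposal is correct, and its first half is literally the paper's computation: the same factorization $f''(r)=r^{-s-2}v^{t-2}\bigl[s(s+1)v^2+t(2s+1)v+t(t-1)\bigr]$ with $v=\log(1/r)$, the same observation that $t\geq 1$ keeps the constant term nonnegative, and the same role for $\alpha<\pi^{-1}$ in confining all distances to $(0,1)$. Where you diverge is after the convexity check: the paper at that point simply invokes its Lemma \ref{Optimal} (Borodachov--Hardin--Saff, Theorem 2.3.1), which states that for a kernel $K(x,y)=k(L(x,y))$ with $k$ strictly convex and decreasing, the minimal configurations are exactly the equally spaced ones. You instead re-prove that lemma from scratch: you pull the kernel back to the strictly convex function $\Phi$ on $(0,\ell)$ (handling the kink at $\ell/2$, where the decrease of $f$ makes the corner convex), decompose the energy by cyclic index gap $m$, and apply Jensen with the constraint $\sum_i A_i^{(m)}=m\ell$, using strict convexity at $m=1$ to get uniqueness. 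This is in substance the classical Fejes T\'oth argument underlying the cited theorem, so the mathematics agrees; what your route buys is self-containedness and an explicit view of where strict convexity produces the ``only if'' direction, at the cost of length and of the two routine facts you flag (strict convexity of a piecewise strictly convex function with a convex corner, and the equality case of Jensen for strictly convex functions), both of which are standard and easily supplied. What the paper's route buys is brevity: once $k''>0$ and $k'<0$ are verified on the relevant interval, the conclusion is immediate by citation, which is why the paper organizes Propositions \ref{Equalspace1}--\ref{Equalspace3} as three instances of the same lemma with different convexity verifications.
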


\begin{Proposition}\label{Equalspace2}
Let  $N \geq 2,$  $0<\alpha<(e\pi)^{-1},$ and $s,t$ satisfy $s> 0,t\geq 0$ or $s=0,t>0.$
Then, $\omega_N$ is a minimal $N$-point $s,\log^t$-energy configuration on $\mathbb{S}^1_{\alpha}$ with  the geodesic distance $L$ if and only if $\omega_N$ is a configuration of $N$ distinct equally spaced points on $\mathbb{S}^1_{\alpha}$.
\end{Proposition}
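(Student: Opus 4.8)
The plan is to reduce the statement to a convexity property of the kernel, viewed as a function of the angular (equivalently, geodesic) distance, and then to run the classical averaging argument for points on a circle. Parametrize $\omega_N$ by angles and let $\phi_{ij}\in[0,\pi]$ be the angular distance between the $i$-th and $j$-th points, so that $L(x_i,x_j)=\alpha\phi_{ij}$ and each pairwise interaction equals $\Phi(\alpha\phi_{ij})$, where
\[
\Phi(r):=r^{-s}\Bigl(\log\tfrac1r\Bigr)^{t},\qquad r\in(0,\alpha\pi].
\]
Since $r=\alpha\phi$ is an affine change of variable and $\alpha>0$, it suffices to show that $\Phi$ is strictly convex and strictly decreasing on $(0,\alpha\pi]$; convexity and monotonicity in $\phi$ follow at once. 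The hypothesis $\alpha<(e\pi)^{-1}$ gives $\alpha\pi<e^{-1}$, so every relevant distance satisfies $\ell:=\log(1/r)>1$; this single fact is what I expect to carry the delicate part of the argument.

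Granting strict convexity and monotonicity, I would finish as follows. Order the points cyclically with gaps $g_1,\dots,g_N$ summing to $2\pi$, and for each $k\in\{1,\dots,N-1\}$ set $a_i^{(k)}:=g_i+\cdots+g_{i+k-1}$ (indices mod $N$), the directed $k$-step arcs, which satisfy $\sum_{i=1}^N a_i^{(k)}=2\pi k$. Writing $H(u):=\Phi\bigl(\alpha\min\{u,2\pi-u\}\bigr)$ for $u\in[0,2\pi]$, one checks that $H$ is convex on $[0,2\pi]$: it is strictly convex on each of $[0,\pi]$ and $[\pi,2\pi]$, and the corner at $u=\pi$ opens upward precisely because $\Phi$ is decreasing. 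Minimizing configurations have distinct points (else the energy is $+\infty$), so each ordered pair corresponds to a unique directed arc in $(0,2\pi)$, giving
\[
E^{s}_{\log^{t}}(\omega_N)=\sum_{k=1}^{N-1}\sum_{i=1}^{N}H\bigl(a_i^{(k)}\bigr)\ \ge\ \sum_{k=1}^{N-1}N\,H\Bigl(\tfrac{2\pi k}{N}\Bigr),
\]
by Jensen's inequality applied for each fixed $k$, with equality exactly at the equally spaced configuration. Strict convexity of $H$ forces, already from the $k=1$ term, that equality requires all gaps to equal $2\pi/N$, which yields simultaneously that the minimum is attained at, and only at, the equally spaced configuration.

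The technical heart is the convexity computation. A direct differentiation gives
\[
\Phi'(r)=-r^{-s-1}\ell^{\,t-1}(s\ell+t),\qquad \Phi''(r)=r^{-s-2}\ell^{\,t-2}\,Q(\ell),
\]
with $Q(\ell):=s(s+1)\ell^{2}+t(2s+1)\ell+t(t-1)$. Monotonicity ($\Phi'<0$) is immediate whenever $(s,t)\neq(0,0)$, so everything reduces to proving $Q(\ell)>0$ for $\ell>1$. When $t\ge1$ all coefficients are nonnegative and $t(t-1)\ge0$, so $Q>0$ trivially; likewise the case $s=0,\ t>0$ gives $Q(\ell)=t(\ell+t-1)>0$ since $\ell>1$. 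The genuinely delicate regime is $0<t<1$ with $s>0$, where the constant term $t(t-1)$ is negative. Here I would observe that $Q$ is an upward parabola with vertex at $\ell^\ast=-t(2s+1)/\bigl(2s(s+1)\bigr)<0$, hence increasing on $[0,\infty)$, while
\[
Q(1)=s(s+1)+t(2s+1)+t(t-1)=(s+t)^{2}+s>0,
\]
so $Q(\ell)\ge Q(1)>0$ for all $\ell\ge1$.

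The main obstacle is thus locating the radius threshold rather than performing the calculus: for $0<t<1$ one computes that $\Phi''(r)<0$ as $r\to1^-$ (there $\ell\to0^+$ and $Q(\ell)\to t(t-1)<0$), so $\Phi$ is genuinely concave near $r=1$ and some upper bound on $\alpha$ is unavoidable. The bound $\alpha<(e\pi)^{-1}$ is exactly the clean choice that guarantees $\ell>1$ on the whole range of distances, making the estimate $Q(\ell)\ge Q(1)>0$ available and thereby securing the strict convexity that both the existence and the uniqueness parts of the argument require.
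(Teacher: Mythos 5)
Your proof is correct, and it is more self-contained than the paper's, though the analytic core is the same. The paper verifies exactly the two properties you do --- that $k(x)=x^{-s}\left(\log(1/x)\right)^{t}$ is strictly decreasing (its Lemma \ref{decre}) and strictly convex on $(0,1/e)$, with the identical second-derivative factor $Q(\ell)=s(s+1)\ell^{2}+t(2s+1)\ell+t(t-1)$, $\ell=\log(1/x)$ --- and then finishes in one line by invoking Lemma \ref{Optimal} (Theorem 2.3.1 of Borodachov--Hardin--Saff): a kernel that is a strictly convex, decreasing function of geodesic distance is minimized exactly by $N$ equally spaced points. You instead re-prove that lemma from scratch: the decomposition of the energy over the $k$-step arcs $a_i^{(k)}$ with $\sum_i a_i^{(k)}=2\pi k$, the convexity of $H(u)=\Phi\left(\alpha\min\{u,2\pi-u\}\right)$ with the upward corner at $u=\pi$ supplied by monotonicity, and Jensen's inequality for each $k$ with the strict-equality case at $k=1$ forcing equal gaps. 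This costs you extra work but makes the argument self-contained and makes visible where uniqueness of the minimizer comes from; the paper's route is shorter at the price of a black box. Your positivity argument for $Q$ also differs in form: the paper uses the single regrouping $Q(\ell)=t^{2}+2st\ell+s(1+s)\ell^{2}+(\ell-1)t$, valid for all admissible $(s,t)$ at once, while you split into cases; note that your case list ($t\ge 1$; $s=0,\,t>0$; $0<t<1,\,s>0$) omits $t=0,\,s>0$, where $Q(\ell)=s(s+1)\ell^{2}>0$ is trivial, so nothing is actually lost. Both arguments use the hypothesis $\alpha<(e\pi)^{-1}$ in the same way, namely to guarantee $\ell>1$ on the entire range of geodesic distances, which is indispensable since $Q(\ell)\to t(t-1)<0$ as $\ell\to 0^{+}$ when $0<t<1$.
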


\begin{Proposition}\label{Equalspace3} Let  $N \geq 2,$ $s\geq  0,$ $t\geq 1,$ and $0<\alpha<1/2.$  Then, $\omega_N$ is a minimal $N$-point $s,\log^t$-energy configuration on $\mathbb{S}^1_{\alpha}$ with  the Euclidean metric $d_u$ if and only if $\omega_N$ is a configuration of $N$ distinct equally spaced points on $\mathbb{S}^1_{\alpha}$.
\end{Proposition}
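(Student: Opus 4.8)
The plan is to pass to angular coordinates, decompose the total energy into ``levels'' of equal combinatorial type, and reduce the whole problem to the strict convexity of a single one-variable function, after which Jensen's inequality closes both directions at once.

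First I would record that a minimizer exists and must consist of $N$ distinct points (otherwise the energy is $+\infty$), so I may write $\omega_N=\{x_1,\dots,x_N\}$ with $x_i\in\mathbb{S}^1_\alpha$ at angle $\theta_i$, ordered as $0\le\theta_1<\cdots<\theta_N<2\pi$. Since two points with central angle $\psi$ have chord length $2\alpha\sin(\psi/2)$, and since $\sin((2\pi-\psi)/2)=\sin(\psi/2)$, the Euclidean distance $d_u$ depends on the angular difference only through $\psi\mapsto 2\alpha\lvert\sin(\psi/2)\rvert$. Putting $\theta_{N+i}:=\theta_i+2\pi$ and $\psi_{i,\ell}:=\theta_{i+\ell}-\theta_i\in(0,2\pi)$ for the ``level-$\ell$'' pair, every ordered pair is hit exactly once as $(i,i+\ell)$ with $1\le\ell\le N-1$, so
\[
E^s_{\log^t}(\omega_N)=\sum_{\ell=1}^{N-1}\sum_{i=1}^{N}F(\psi_{i,\ell}),\qquad F(\psi):=K^s_{\log^t}\!\bigl(2\alpha\sin(\psi/2)\bigr).
\]
The combinatorial heart of the decomposition is the identity $\sum_{i=1}^N\psi_{i,\ell}=2\pi\ell$ for each $\ell$ (each consecutive gap is counted $\ell$ times), together with the observation that the equally spaced configuration is precisely the one that realizes $\psi_{i,\ell}=2\pi\ell/N$ for every $i$ and $\ell$ simultaneously.

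The crux is to prove that $F$ is strictly convex on $(0,2\pi)$, and this is the step I expect to be the main obstacle, since it is where all three hypotheses $s\ge0$, $t\ge1$, and $0<\alpha<\tfrac12$ are consumed. I would substitute $u=\psi/2\in(0,\pi)$ and introduce $v:=-\log(2\alpha\sin u)$; the hypothesis $\alpha<\tfrac12$ forces $2\alpha\sin u<1$, hence $v>0$, which is exactly the condition making the kernel positive. Writing $F=\exp(sv+t\log v)$ and differentiating twice in $u$ with $v'=-\cot u$ and $v''=\csc^2 u$, I expect $F''/F$ to collapse to
\[
s^2\cot^2 u+\tfrac{2st}{v}\cot^2 u+s\csc^2 u+\tfrac{t}{v}\csc^2 u+\tfrac{t(t-1)}{v^2}\cot^2 u,
\]
a sum whose terms are each nonnegative precisely because $s\ge0$, $t\ge1$, and $v>0$, with $\tfrac{t}{v}\csc^2 u$ strictly positive. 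The role of $t\ge1$ is to keep the coefficient $t(t-1)$ nonnegative, and the role of $\alpha<\tfrac12$ is to keep $v$ positive; I would verify the borderline case $s=0$ on its own, where the first three terms vanish and positivity rests entirely on $t\ge1$ together with $v>0$. Thus $F''>0$ and $F$ is strictly convex on $(0,2\pi)$.

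Granting strict convexity, the proof finishes quickly. For each fixed $\ell$, Jensen's inequality applied to the $N$ numbers $\psi_{i,\ell}$, whose average is $2\pi\ell/N\in(0,2\pi)$, yields $\sum_i F(\psi_{i,\ell})\ge N\,F(2\pi\ell/N)$, with equality if and only if all the $\psi_{i,\ell}$ coincide. Summing over $\ell=1,\dots,N-1$ gives
\[
E^s_{\log^t}(\omega_N)\ge\sum_{\ell=1}^{N-1}N\,F\!\bigl(2\pi\ell/N\bigr),
\]
and the right-hand side is exactly the energy of an equally spaced configuration, so every equally spaced configuration is a minimizer. Conversely, if $\omega_N$ is a minimizer then equality must hold in the $\ell=1$ instance of Jensen, which forces every consecutive gap $\psi_{i,1}$ to equal $2\pi/N$; that is, $\omega_N$ is equally spaced. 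This establishes the ``if and only if.'' Finally I would note that the identical scheme proves Propositions \ref{Equalspace1} and \ref{Equalspace2}, replacing $F$ by the geodesic potential $\psi\mapsto K^s_{\log^t}(\alpha\,\min\{\psi,2\pi-\psi\})$ and rerunning the second-derivative computation; there the thresholds $\alpha<\pi^{-1}$ and $\alpha<(e\pi)^{-1}$ arise as exactly the conditions guaranteeing, respectively, $v>0$ and $v>1$ on the relevant range, the latter being what is needed to absorb the term $\tfrac{t(t-1)}{v^2}$ when $t$ is allowed to lie in $(0,1)$.
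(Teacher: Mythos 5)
Your proposal is correct, but it follows a genuinely different route from the paper's. The paper disposes of this proposition by citation: it writes the Euclidean distance as $d_u(x,y)=2\alpha\sin\left(L(x,y)/2\alpha\right)$, checks that $k(x)=\left(2\alpha\sin(x/2\alpha)\right)^{-s}\left(\log\frac{1}{2\alpha\sin(x/2\alpha)}\right)^{t}$ is strictly decreasing and strictly convex in the geodesic distance $x\in(0,\pi\alpha]$, and then invokes Lemma \ref{Optimal} (Theorem 2.3.1 of Borodachov--Hardin--Saff). You never use that lemma; instead you reprove its content inline via the classical Fejes T\'oth winding argument: the decomposition $E^s_{\log^t}(\omega_N)=\sum_{\ell=1}^{N-1}\sum_{i=1}^{N}F(\psi_{i,\ell})$ with $\sum_{i}\psi_{i,\ell}=2\pi\ell$, followed by Jensen's inequality level by level, with the equality case at level $\ell=1$ forcing equal consecutive gaps. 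The computational core is the same in both proofs: your formula for $F''/F$ in the variables $u=\psi/2$, $v=-\log(2\alpha\sin u)$ agrees term by term with the paper's displayed expression for $q''$, and both consume $s\ge 0$, $t\ge 1$, and $v>0$ (the last coming from $\alpha<1/2$) in exactly the same way; your convexity in the angle on all of $(0,2\pi)$ plays the role of the lemma's ``convex and decreasing in geodesic distance on the half range,'' and for a chord kernel symmetric about $\psi=\pi$ these are interchangeable. What your version buys is self-containedness and a transparent equality analysis; what the paper's buys is brevity and a uniform treatment of Propositions \ref{Equalspace1}--\ref{Equalspace3} by one citation. Two small polish points: the phrase ``I expect $F''/F$ to collapse to'' should be replaced by the verified computation (the formula you state is in fact correct), and in your closing remark about Propositions \ref{Equalspace1} and \ref{Equalspace2} note that the geodesic kernel $\psi\mapsto K^s_{\log^t}\left(\alpha\min\{\psi,2\pi-\psi\}\right)$ has a corner at $\psi=\pi$, so ``rerunning the second-derivative computation'' must be supplemented by the (easy) observation that a downward kink between two convex decreasing/increasing branches preserves convexity.
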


Note that the conditions $0<\alpha<\pi^{-1}$ in Proposition \ref{Equalspace1} and $0<\alpha<1/2$ in Proposition \ref{Equalspace3} are needed to make sure that $\textup{diam}(\mathbb{S}^1_{\alpha})<1$ corresponding to the Euclidean metric $d_u$ and the geodesic distance $L,$ respectively.

\section{Auxiliary Lemmas}\label{123}

\begin{Lemma}\label{inc}
Let $\beta \geq 0$ and $h:(0,1) \to (0,\infty)$ be a function defined by
$$ h(x):=x\left( \log \frac{1}{x} \right)^{-\beta}\quad \textup{for all $x\in (0,1)$}.$$
Then, $h(x)$ is strictly increasing on $(0,1)$.
\end{Lemma}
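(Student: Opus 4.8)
The plan is to prove that $h(x) = x(\log(1/x))^{-\beta}$ is strictly increasing on $(0,1)$ by showing $h'(x) > 0$ throughout the interval. Since $h$ is a product of elementary functions that are smooth on $(0,1)$, differentiation is the natural tool. First I would rewrite $\log(1/x) = -\log x$ to keep the signs transparent, noting that on $(0,1)$ we have $-\log x > 0$, so $h$ is genuinely a well-defined positive function. Writing $u(x) := \log(1/x) = -\log x$, we have $u'(x) = -1/x$ and $h(x) = x\,u(x)^{-\beta}$.

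Next I would compute the derivative by the product and chain rules:
$$h'(x) = u(x)^{-\beta} + x\cdot(-\beta)u(x)^{-\beta-1}u'(x) = u(x)^{-\beta} + x\cdot(-\beta)u(x)^{-\beta-1}\left(-\frac{1}{x}\right).$$
This simplifies to
$$h'(x) = u(x)^{-\beta} + \beta\,u(x)^{-\beta-1} = u(x)^{-\beta-1}\bigl(u(x) + \beta\bigr).$$
The factorization is the crux: on $(0,1)$ both $u(x) = \log(1/x) > 0$ and $\beta \geq 0$, so $u(x) + \beta > 0$ and $u(x)^{-\beta-1} > 0$. Hence $h'(x) > 0$ for every $x \in (0,1)$, which gives strict monotonicity.

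There is essentially no hard obstacle here; the only point requiring minor care is the boundary behavior and the sign bookkeeping, since careless handling of $\log(1/x)$ versus $\log x$ can flip a sign and obscure the positivity. The key observation that makes the argument clean is that the $x$ in the original function cancels exactly against the $1/x$ coming from differentiating $\log(1/x)$, eliminating any $x$-dependence that could change sign and leaving a manifestly positive expression. I would present the computation in the factored form above so that positivity is read off immediately from $u(x) > 0$ and $\beta \geq 0$, and then conclude by the standard fact that a function with everywhere-positive derivative on an interval is strictly increasing there.
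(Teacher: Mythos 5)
Your proof is correct and follows essentially the same route as the paper: both compute $h'(x) = \left(\log\frac{1}{x}\right)^{-\beta} + \beta\left(\log\frac{1}{x}\right)^{-\beta-1}$ and conclude positivity from $\log\frac{1}{x} > 0$ on $(0,1)$ and $\beta \geq 0$. Your factored form $u(x)^{-\beta-1}\bigl(u(x)+\beta\bigr)$ is a minor cosmetic improvement that makes the positivity immediate, but the argument is the same.
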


\begin{proof}[Proof of Lemma \ref{inc}]
Because
$$h'(x)=\beta \left( \log \frac{1}{x} \right)^{-(\beta+1)}+\left( \log \frac{1}{x} \right)^{-\beta}$$
and $( \log (1/x) )^{-\beta}>0$ for all $x \in (0,1)$ and $ \beta \geq0$, $h'(x)>0$ for all $x \in (0,1)$. 
Therefore, $h(x)$ is strictly increasing on $(0,1)$.
\end{proof}

\begin{Lemma}\label{decre}
Let $(s,t)\in [0,\infty)\times [0,\infty)]\setminus\{(0,0)\}$ and $p:(0,1) \to (0,\infty)$ be a function defined by
$$ p(x):=\frac{1}{x^s}\left( \log \frac{1}{x} \right)^{t}\quad \textup{for all $x\in (0,1)$}.$$
Then, $p(x)$ is strictly decreasing on $(0,1)$.
\end{Lemma}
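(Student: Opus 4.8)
The plan is to show $p$ is strictly decreasing by computing its derivative and verifying that it is strictly negative throughout $(0,1)$. First I would record that, since $x\in(0,1)$, we have $x>0$ and $\log(1/x)>0$, so every factor of $p(x)=x^{-s}\left(\log\frac{1}{x}\right)^{t}$ is positive and $p$ indeed maps into $(0,\infty)$. Using $\frac{d}{dx}\log\frac{1}{x}=-\frac{1}{x}$ together with the product rule and the chain rule for the power $\left(\log\frac{1}{x}\right)^{t}$ (valid because the base is strictly positive, whatever the real exponent), I would differentiate and then factor the result as
\[
p'(x)=-s\,x^{-s-1}\left(\log\frac{1}{x}\right)^{t}-t\,x^{-s-1}\left(\log\frac{1}{x}\right)^{t-1}=-x^{-s-1}\left(\log\frac{1}{x}\right)^{t-1}\left(s\log\frac{1}{x}+t\right).
\]

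The second step is a sign analysis of the three factors on $(0,1)$. The factor $x^{-s-1}$ is positive because $x>0$. The factor $\left(\log\frac{1}{x}\right)^{t-1}$ is positive for every real exponent $t-1$, since its base $\log\frac{1}{x}$ is strictly positive on $(0,1)$; this covers in particular the case $t<1$, where the exponent is negative but the expression remains well defined and positive. What remains is to control the last factor $s\log\frac{1}{x}+t$.

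This last factor is the only place where the hypothesis $(s,t)\neq(0,0)$ is used, and it is the single point that requires a moment of care. Since $s\ge 0$, $t\ge 0$, and $\log\frac{1}{x}>0$, the quantity $s\log\frac{1}{x}+t$ is nonnegative, and it vanishes only if both $s\log\frac{1}{x}=0$ and $t=0$; as $\log\frac{1}{x}>0$ forces $s=0$ in the first equality, such vanishing would require $(s,t)=(0,0)$, which is excluded. Hence $s\log\frac{1}{x}+t>0$ on all of $(0,1)$.

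Combining the three sign determinations yields $p'(x)<0$ for every $x\in(0,1)$, so $p$ is strictly decreasing, as claimed. I do not anticipate any genuine obstacle beyond the bookkeeping above: the argument is a direct monotonicity-by-derivative computation, and the exclusion of $(0,0)$ serves precisely to prevent the final factor from degenerating to zero. (One could instead try to route through Lemma \ref{inc}, but since $p$ involves $x^{-s}$ rather than $x$, the reduction is not clean, and the direct derivative computation is the simplest path.)
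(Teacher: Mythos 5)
Your proof is correct, but it takes a genuinely different route from the paper's. You differentiate $p$ directly, factor the derivative as
$p'(x)=-x^{-s-1}\left(\log\frac{1}{x}\right)^{t-1}\left(s\log\frac{1}{x}+t\right)$,
and observe that the hypothesis $(s,t)\neq(0,0)$ is exactly what keeps the last factor strictly positive on $(0,1)$; the sign analysis of the other two factors is routine. The paper instead reuses Lemma \ref{inc}: setting $\beta=t/s$, it writes $p(x)=\left(1/h(x)\right)^{s}$, so $p$ is strictly decreasing because $h$ is positive and strictly increasing and $u\mapsto u^{s}$ preserves strict monotonicity for $s>0$ --- precisely the reduction you dismissed as ``not clean.'' The comparison is instructive at the boundary case $s=0$: the paper's substitution $\beta=t/s$ is undefined there, so as written its argument only covers $s>0$, and the admissible case $s=0$, $t>0$ needs the separate (trivial) remark that $\left(\log\frac{1}{x}\right)^{t}$ is strictly decreasing; your derivative computation handles all admissible $(s,t)$ uniformly and makes transparent where the exclusion of $(0,0)$ enters. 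What the paper's route buys is economy --- no new differentiation, just composition of monotone maps; what yours buys is self-containedness and completeness, including the case the paper's reduction silently skips.
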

\begin{proof}[Proof of Lemma \ref{decre}] Using Lemma \ref{inc}, we set  $\beta=t/s$ and 
 $$p(x)=\left(\frac{1}{h(x)}\right)^s=\frac{1}{x^s}\left( \log \frac{1}{x} \right)^{t}$$
 is strictly decreasing on $(0,1)$.
\end{proof}

\begin{Lemma}\label{BoundedBel} Let $(A,d)$ be an infinite compact metric space with $\textup{diam}(A)<1$ and  $s,t\geq 0.$
Then, for all $N$-point configurations $\omega_N\subset A,$
$$E_{\log^t}^s(\omega_N)\geq \frac{N(N-1)}{(\textup{diam}(A))^s} \left( \log \frac{1}{\textup{diam}(A)}\right)^t.$$
\end{Lemma}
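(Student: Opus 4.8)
The plan is to bound each summand of $E^s_{\log^t}(\omega_N)$ from below by one fixed quantity depending only on $\textup{diam}(A)$, and then to sum this bound over the $N(N-1)$ ordered pairs of distinct points in $\omega_N$. The driving observation is that the kernel $K^s_{\log^t}(x,y)$ is a decreasing function of the distance $d(x,y)$, so its smallest possible value over $A\times A$ is attained at the largest admissible distance, which is controlled by the diameter.

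First I would note that for distinct points $x,y\in\omega_N$ we have $d(x,y)>0$, while the hypothesis $\textup{diam}(A)<1$ gives $d(x,y)\le \textup{diam}(A)<1$. Hence every pairwise distance lies in the open interval $(0,1)$, which is exactly the domain on which the auxiliary function $p(x)=x^{-s}(\log(1/x))^t$ of Lemma \ref{decre} is defined.

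Next, assuming $(s,t)\neq(0,0)$, I would invoke Lemma \ref{decre} to conclude that $p$ is strictly decreasing on $(0,1)$. Since $d(x,y)\le \textup{diam}(A)$ and both lie in $(0,1)$, monotonicity yields
$$K^s_{\log^t}(x,y)=p\big(d(x,y)\big)\ge p\big(\textup{diam}(A)\big)=\frac{1}{(\textup{diam}(A))^s}\left(\log\frac{1}{\textup{diam}(A)}\right)^t$$
for every such pair. Summing this inequality over all $N(N-1)$ ordered pairs $x\neq y$ in $\omega_N$ gives the desired lower bound on $E^s_{\log^t}(\omega_N)$.

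Finally, the degenerate case $(s,t)=(0,0)$ is excluded from Lemma \ref{decre} and must be treated separately, but it is immediate: the kernel then equals $1$ identically, so both sides of the claimed inequality reduce to $N(N-1)$ and equality holds. There is no substantive obstacle here; the only points requiring care are verifying that all distances fall strictly inside $(0,1)$ so that Lemma \ref{decre} applies, and disposing of the trivial $(0,0)$ case by hand.
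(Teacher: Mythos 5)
Your proof is correct and follows essentially the same route as the paper, which likewise reduces the bound to the strict monotonicity of $p(x)=x^{-s}(\log(1/x))^t$ from Lemma \ref{decre} applied at $d(x,y)\le \textup{diam}(A)<1$ and then sums over the $N(N-1)$ ordered pairs. Your separate treatment of the excluded case $(s,t)=(0,0)$, where the kernel is identically $1$ and equality holds, is a careful addition that the paper's one-line sketch leaves implicit.
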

\begin{proof}[Proof of Lemma \ref{BoundedBel}] The proof relies on the fact that $p(x)$ in Lemma \ref{decre}
 is strictly decreasing on $(0,1)$.
\end{proof}

\begin{Lemma}\label{ineqE}
Let $(A,d)$ be an infinite compact metric space with $\textup{diam}(A)<1$ and $\omega_N=\{x_1,\ldots,x_N\}$ be any configuration of $N$ distinct points of $A$.
Then, for any $s>r\geq 0 $ and $t \geq 0$, 
\begin{equation*}
 E^r_{\log^{t+1}}(\omega_N) \leq \frac{E^s_{\log^{t}}(\omega_N)-E^{r}_{\log^{t}}(\omega_N)}{s-r} \leq E^{s}_{\log^{t+1}}(\omega_N).
\end{equation*}
\end{Lemma}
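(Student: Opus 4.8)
The plan is to reduce the two-sided estimate to a single-pair inequality and then sum over pairs. Fix two distinct points $x,y\in\omega_N$ and write $d:=d(x,y)$. Since the points are distinct and $\textup{diam}(A)<1$, we have $0<d<1$, and therefore $L:=\log(1/d)>0$. Consider the function of one real variable $u\geq 0$ given by $\phi(u):=d^{-u}L^{t}=e^{uL}L^{t}$, which is precisely the contribution $K^{u}_{\log^{t}}(x,y)$ of the pair $(x,y)$ to $E^{u}_{\log^{t}}(\omega_N)$. Differentiating in $u$ gives $\phi'(u)=L\,e^{uL}L^{t}=e^{uL}L^{t+1}=K^{u}_{\log^{t+1}}(x,y)$, i.e.\ the derivative in the exponent $u$ raises the power of the logarithm by one. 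This identity is the engine of the whole argument.

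Next I would record that $\phi$ is convex: because $L>0$ we have $\phi''(u)=e^{uL}L^{t+2}\geq 0$, so $\phi'$ is nondecreasing on $[0,\infty)$. For a convex differentiable function and $r<s$, the mean value theorem produces $\xi\in(r,s)$ with $\dfrac{\phi(s)-\phi(r)}{s-r}=\phi'(\xi)$, and monotonicity of $\phi'$ then yields the secant-slope bounds $\phi'(r)\leq\dfrac{\phi(s)-\phi(r)}{s-r}\leq\phi'(s)$. Translating back through the identifications above, this reads
$$K^{r}_{\log^{t+1}}(x,y)\ \leq\ \frac{K^{s}_{\log^{t}}(x,y)-K^{r}_{\log^{t}}(x,y)}{s-r}\ \leq\ K^{s}_{\log^{t+1}}(x,y).$$
Finally I would sum this per-pair inequality over all ordered pairs of distinct points of $\omega_N$; since $E^{u}_{\log^{t}}(\omega_N)=\sum_{x\neq y}K^{u}_{\log^{t}}(x,y)$ and summation preserves both inequalities, the left- and right-hand sums become $E^{r}_{\log^{t+1}}(\omega_N)$ and $E^{s}_{\log^{t+1}}(\omega_N)$, while the middle term becomes the stated difference quotient, giving the claim.

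I do not expect a genuine obstacle here, as the argument is a clean convexity/mean-value computation. The only point requiring care—and the sole place where the hypothesis $\textup{diam}(A)<1$ is used—is the assertion $L=\log(1/d)>0$, which guarantees the convexity of $\phi$ (and hence the correct \emph{direction} of both inequalities). If one instead had $d\geq 1$ the sign of $L$ could vanish or reverse, breaking the monotonicity of $\phi'$ and with it the estimate; this is exactly the ``laborious'' regime the authors exclude in the introduction.
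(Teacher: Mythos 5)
Your proof is correct and follows essentially the same route as the paper's: both establish the per-pair bound $K^{r}_{\log^{t+1}}(x,y)\leq\frac{K^{s}_{\log^{t}}(x,y)-K^{r}_{\log^{t}}(x,y)}{s-r}\leq K^{s}_{\log^{t+1}}(x,y)$ and then sum over all pairs of distinct points. The only cosmetic difference is that the paper states the secant-slope inequality for $u\mapsto d^{-u}$ and then multiplies by the positive constant $\left(\log\frac{1}{d}\right)^{t}$, whereas you absorb that factor into $\phi$ from the start and justify the bracketing explicitly via the mean value theorem and convexity.
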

\begin{proof}[Proof of Lemma \ref{ineqE}]
 Let  $x_i,x_j \in \omega_N$ where $1\leq i \not=j \leq N$, let $s>r\geq0$, and let $t \geq 0$.
Then,
\begin{equation*}
\frac{1}{d(x_i,x_j)^r}\log\frac{1}{d(x_i,x_j)} \leq \frac{\displaystyle \frac{1}{d(x_i,x_j)^{s}}-\frac{1}{d(x_i,x_j)^{r}}}{s-r} \leq \frac{1}{d(x_i,x_j)^s}\log \frac{1}{d(x_i,x_j)}.
\end{equation*}
Since $\left(\log \displaystyle \frac{1}{d(x_i,x_j)}\right)^{t} >0$,
$$
\frac{1}{d(x_i,x_j)^r}\left(\log\frac{1}{d(x_i,x_j)} \right)^{t +1}  \leq \frac{\displaystyle \frac{1}{d(x_i,x_j)^{s}}\left(\log \displaystyle \frac{1}{d(x_i,x_j)}\right)^{t} -\frac{1}{d(x_i,x_j)^{r}}\left(\log \displaystyle \frac{1}{d(x_i,x_j)}\right)^{t} }{s-r} $$
$$ \leq \frac{1}{d(x_i,x_j)^s}\left(\log \frac{1}{d(x_i,x_j)}\right)^{t +1}.$$
It follows that 
\begin{equation*}
E^{r}_{\log^{t+1}}(\omega_N) \leq \frac{E^{s}_{\log^{t}}(\omega_N)-E^{r}_{\log^{t}}(\omega_N)}{s-r} \leq E^{s}_{\log^{t+1}}(\omega_N).
\end{equation*}
\end{proof}

Let $\Gamma$ be a rectifiable simple closed curve in $\mathbb{R}^m, m \geq 2,$ of length $|\Gamma|$ with a chosen orientation. We recall that $L(x, y)$ is the geodesic distance between the points $x$ and $y$ on $\Gamma.$ With the help of the following lemma \cite[Theorem 2.3.1]{SaffHardinBoroda}, we can prove Propositions \ref{Equalspace1}-\ref{Equalspace3}.

\begin{Lemma}\label{Optimal}
Let $k : (0, |\Gamma| /2] \rightarrow \mathbb{R}$ be a strictly convex and decreasing function defined at $u = 0$ by the (possibly infinite) value $\displaystyle\lim_{u \rightarrow 0^+} k(u)$ and let $K$ be the kernel on $\Gamma \times \Gamma$ defined by $K(x, y) = k (L(x, y)).$ Then, all  minimal $N$-point $K$-energy configurations on $\Gamma$  are configurations of $N$ distinct equally spaced points on $\Gamma$ with respect  to the arc length and vice versa.
\end{Lemma}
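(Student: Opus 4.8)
To prove Lemma \ref{Optimal}, the plan is to pass to the arc-length picture and then apply Jensen's inequality one neighbour-class at a time. First I would parametrize $\Gamma$ by arc length and identify it with the circle $\mathbb{R}/\ell\mathbb{Z}$, where $\ell:=|\Gamma|$; the geodesic distance between arc-coordinates $u,v$ is then $\rho(u,v)=\min\{\,|u-v|,\,\ell-|u-v|\,\}\in[0,\ell/2]$, and ``$N$ equally spaced points with respect to arc length'' becomes ``$N$ points with equal consecutive gaps.'' Fixing a configuration, I list its points cyclically, write $g_1,\dots,g_N\ge 0$ with $\sum_{i=1}^N g_i=\ell$ for the consecutive gaps, and for each step $m\in\{1,\dots,N-1\}$ set $S^{(m)}_i:=g_i+g_{i+1}+\cdots+g_{i+m-1}$ (cyclic indices) for the forward arc from the $i$-th to the $(i+m)$-th point. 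Each gap lies in exactly $m$ of these windows, so $\sum_{i=1}^N S^{(m)}_i=m\ell$, and grouping the ordered pairs by their forward step gives $E_K(\omega_N)=\sum_{m=1}^{N-1}\sum_{i=1}^N k\big(\rho^{(m)}_i\big)$, where $\rho^{(m)}_i:=\min\{S^{(m)}_i,\ell-S^{(m)}_i\}$.

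The crux, and what I expect to be the main point, is a convexity observation. Define $\psi:[0,\ell]\to\mathbb{R}$ by $\psi(u):=k(\min\{u,\ell-u\})$, so $\psi=k$ on $[0,\ell/2]$ and $\psi(u)=k(\ell-u)$ on $[\ell/2,\ell]$. I would show $\psi$ is strictly convex on $(0,\ell)$: since $k$ is strictly convex, $\psi'=k'$ is strictly increasing on $(0,\ell/2)$ and $\psi'(u)=-k'(\ell-u)$ is strictly increasing on $(\ell/2,\ell)$, while at the junction the one-sided derivatives are $k'(\ell/2)<0$ on the left and $-k'(\ell/2)>0$ on the right, so $\psi'$ jumps upward there. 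Hence $\psi'$ is strictly increasing on all of $(0,\ell)$. It is exactly the combined hypothesis that $k$ is convex \emph{and} decreasing that turns the geodesic ``$\min$'' into an upward (convex) kink at $\ell/2$, which is why a merely decreasing kernel would not suffice.

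Granting this, the conclusion follows quickly. For each fixed $m$, Jensen's inequality applied to the strictly convex $\psi$ gives $\sum_{i=1}^N k(\rho^{(m)}_i)=\sum_{i=1}^N\psi\big(S^{(m)}_i\big)\ge N\,\psi\big(\tfrac1N\sum_{i=1}^N S^{(m)}_i\big)=N\,\psi(m\ell/N)$, with equality if and only if all $S^{(m)}_i$ coincide. Summing over $m=1,\dots,N-1$ yields $E_K(\omega_N)\ge\sum_{m=1}^{N-1}N\,\psi(m\ell/N)$, whose right-hand side is exactly the $K$-energy of the equally spaced configuration (for which $S^{(m)}_i=m\ell/N$ for all $i,m$). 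Thus equally spaced configurations minimize the energy; conversely, any minimizer must attain equality in every class, and the $m=1$ class forces all gaps $g_i=S^{(1)}_i$ to be equal, i.e. $g_i=\ell/N$, so the minimizer is equally spaced. This gives the claimed equivalence. The only genuine obstacle is the convexity of $\psi$ in the middle paragraph; everything else is bookkeeping and a single application of Jensen's inequality.
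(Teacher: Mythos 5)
Your proposal cannot be checked against an internal proof, because the paper does not actually prove Lemma \ref{Optimal}: it is imported verbatim from \cite[Theorem 2.3.1]{SaffHardinBoroda} and used as a black box in the proofs of Propositions \ref{Equalspace1}--\ref{Equalspace3}. That said, your argument is in substance the standard proof of the cited theorem (going back to Fejes T\'{o}th): group the ordered pairs into the $N-1$ classes of cyclic $m$-step neighbours, express each geodesic distance through the forward arc length via $\psi(u):=k(\min\{u,\ell-u\})$ with $\ell=|\Gamma|$, prove $\psi$ is strictly convex on $(0,\ell)$, and apply Jensen's inequality class by class, the equality case of the $m=1$ class forcing equal gaps. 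Your bookkeeping is correct: each gap lies in exactly $m$ of the windows $S^{(m)}_i$, so $\sum_{i=1}^N S^{(m)}_i=m\ell$; the equally spaced configuration attains every Jensen bound simultaneously; and the equality analysis yields both implications of the stated equivalence.

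The one place that needs repair is the convexity paragraph. First, a strictly convex $k$ need not be differentiable, so the monotonicity argument should be phrased with one-sided derivatives, which exist and are monotone for convex functions. Second, and more substantively, your claim that $k'(\ell/2)<0$ --- hence that $\psi'$ has a strictly positive jump at $\ell/2$ --- does not follow from $k$ being decreasing: the function $k(u)=(u-\ell/2)^2$ is strictly convex and strictly decreasing on $(0,\ell/2]$ with $k'_-(\ell/2)=0$, and the resulting $\psi(u)=(u-\ell/2)^2$ has no kink at $\ell/2$ at all. The correct statement, which suffices, is the weak inequality $\psi'_-(\ell/2)=k'_-(\ell/2)\le 0\le -k'_-(\ell/2)=\psi'_+(\ell/2)$; strict convexity of $\psi$ across the junction then comes from the strict increase of the one-sided derivatives of $k$ on each half, since for $u<\ell/2<v$ one gets $\psi'_+(u)<k'_-(\ell/2)\le -k'_-(\ell/2)<\psi'_-(v)$, not from any jump. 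Alternatively, and more cleanly: $\psi=k\circ\phi$ with $\phi(u)=\min\{u,\ell-u\}$ concave and $k$ convex and decreasing, so $\psi$ is convex; and $\psi$ cannot be affine on any nondegenerate segment (that would force $k$ to be affine on a segment, contradicting strict convexity), hence $\psi$ is strictly convex. With this repair your proof is complete and correct.
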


\section{Proofs of Main Results}\label{ProofofEV}

\begin{proof}[ Proof of Theorem \ref{The2}]
Let $t\geq 0$ be fixed, $s>0,$  $\omega_N^{s,\log^t}$ be a minimal $N$-point $s,\log^{t}$-energy configuration on $A$, and  let  $\omega_N^{\infty}$ be an $N$-point best-packing configuration on $A$. 
Since $\textup{diam}(A)<1$ and points in $\omega_N^{s,\log^t}$ are distinct, there is a constant $c>0$ such that
\begin{equation*}
0<\delta(\omega^{s,\log^t}_N) \leq c<1
\end{equation*}
where the constant $c$ depends only on the set $A.$
This implies that 
\begin{equation*}
\left(\log\frac{1}{c}\right)^{t}  \leq \left(\log\frac{1}{\delta(\omega_N^{s,\log^t})}\right)^{t} .
\end{equation*}
Then,
$$
\frac{1}{\delta_N(A)}\left(\log \frac{1}{c}\right)^{t/s} \leq \frac{1}{\delta(\omega_N^{s,\log^t})}\left(\log \frac{1}{c}\right)^{t/s} \leq \frac{1}{\delta(\omega_N^{s,\log^t})}\left(\log\frac{1}{\delta(\omega_N^{s,\log^t})}\right)^{t/s}$$
\begin{equation}\label{simi}
\leq
\left(E^{s}_{\log^{t}}(\omega_N^{s,\log^t})\right)^{1/s}=\left(\mathcal{E}^s_{\log^{t}}(A,N)\right)^{1/s}  \leq \left(E^{s}_{\log^{t}}(\omega_N^{\infty})\right)^{1/s} \leq \frac{1}{\delta_N(A)}\left( E_{\log^t} (\omega_N^{\infty}) \right)^{1/s}.
\end{equation}
Since 
$$\lim_{s \to \infty} \frac{1}{\delta_N(A)} \left( \log \frac{1}{c}\right)^{t/s}=\frac{1}{\delta_N(A)}$$
and
$$\lim_{s \to \infty} \frac{1}{\delta_N(A)}\left(  E_{\log^t} (\omega_N^{\infty}) \right)^{1/s}=\frac{1}{\delta_N(A)},$$
it follows that
$$ \lim_{s \to \infty} \left(\mathcal{E}^{s}_{\log^{t}} (A,N)\right)^{1/s}=\frac{1}{\delta_N(A)}.$$

Let $\omega_N^*$ be a cluster configuration of $\omega_N^{s,\log^t}$ as $s \to \infty$. 
 This implies that there is a sequence $\{s_k\}_{k=1}^{\infty} \subset \mathbb{R}$ such that $s_k \rightarrow \infty$ and $\omega_N^{s_k,\log^t} \to \omega_N^{*}$ as $k \to \infty$.
Arguing as in \eqref{simi}, we have
$$
 \frac{1}{\delta(\omega_N^{s_k,\log^t})}\left(\log\frac{1}{c}\right)^{t/s_k}\leq
\left(E^{s_k}_{\log^{t}}(\omega_N^{s_k,\log^t})\right)^{1/s_k}=\left(\mathcal{E}^{s_k}_{\log^{t}}(A,N)\right)^{1/s_k} \leq  \left(E^{s_k}_{\log^{t}}(\omega_N^{\infty})\right)^{1/s_k}
$$
$$\leq \frac{1}{\delta(\omega_N^{\infty})}\left( E_{\log^{t}} (\omega_N^{\infty})\right)^{1/s_k}.
$$
Taking $k \to \infty$, we obtain
$$\delta_N(A)=\delta(\omega_N^{\infty})\leq \delta(\omega_N^{*}).$$
This means that $\omega_N^{*}$ is also an $N$-point best-packing configuration on $A$.
\end{proof}

\begin{proof}[Proof of Theorem \ref{contE}] First of all, we show that $g(s)$ is continuous on $(0,\infty).$ Let $s>0$ and let $\omega_N^{s,\log^{t}}$ be a minimal $N$-point $s,\log^{t}$-energy configuration on $A$. Using Lemma \ref{ineqE}, we obtain for any $\omega_N^{s,\log^t},$
$$
\liminf_{r \to s^{-}} \frac{g(r)-g(s)}{r-s} \geq  \liminf_{r \to s^{-}} \frac{E^{r}_{\log^{t}} (\omega_N^{s,\log^{t}})-E^{s}_{\log^{t}} (\omega_N^{s,\log^{t}})}{r-s}
$$
\begin{equation}\label{inftneg}
\geq \lim_{r \to s^{-}} E^{r}_{\log^{t+1}} (\omega_N^{s,\log^{t}}) =E^{s}_{\log^{t+1}} (\omega_N^{s,\log^{t}}) \geq \sup G^{s}_{\log^{t+1}} (A,N) >0,
\end{equation}
 and 
\begin{align}\label{suptneg1}
\limsup_{r \to s^{-}}  \frac{g(r)-g(s)}{r-s} 
\leq \limsup_{r \to s^{-}} \frac{E^{r}_{\log^{t}} (\omega_N^{r,\log^{t}})-E^{s}_{\log^{t}} (\omega_N^{r,\log^{t}})}{r-s} \leq \limsup_{r \to s^{-}}E^{s}_{\log^{t+1}} (\omega_N^{r,\log^{t}}),
\end{align}
where the second inequality in \eqref{inftneg} follows from the arbitrariness of $\omega_N^{s,\log^t}$ and the last inequality in \eqref{inftneg} follows from Lemma \ref{BoundedBel}.

Let $\omega_N$ be a fixed configuration of $N$ distinct points of $A$.
Note that $0< \delta(\omega_N) <1$.
For all $r\in (s/2,s)$, we have
$$
\left(\frac{1}{\delta(\omega_N^{r,\log^{t}})}\right)^{s/2}\left(\log \frac{1}{\delta(\omega_N^{r,\log^{t}})} \right)^{t} \leq \left( \frac{1}{\delta(\omega_N^{r,\log^{t}})}\right)^{r}\left( \log \frac{1}{\delta(\omega_N^{r,\log^{t}})} \right)^{t} 
\leq E^{r}_{\log^{t}}(\omega_N^{r,\log^{t}})$$
$$
\leq  E^{r}_{\log^{t}}(\omega_N) \leq \left( \frac{1}{\delta(\omega_N)} \right)^{r}\left(\log \frac{1}{\delta(\omega_N)} \right)^{t} N(N-1)
$$
$$\leq 
\left( \frac{1}{\delta(\omega_N)}\right)^{s}\left(\log \frac{1}{\delta(\omega_N)} \right)^{t} N(N-1).
$$
That is,
$$(\delta(\omega_N^{r,\log^{t}}))^{s/2}\left(\log \frac{1}{\delta(\omega_N^{r,\log^{t}})} \right)^{-t} \geq ( \delta(\omega_N))^{s}\left(\log \frac{1}{\delta(\omega_N)} \right)^{-t}( N(N-1))^{-1}.$$
This implies that for all $r\in (s/2,s),$
$$\delta(\omega_N^{r,\log^{t}})\left(\log \frac{1}{\delta(\omega_N^{r,\log^{t}})} \right)^{-2t/s} \geq ( \delta(\omega_N))^{2}\left(\log \frac{1}{\delta(\omega_N)} \right)^{-2t/s}( N(N-1))^{-2/s}=:c_1>0.$$
Since by Lemma \ref{inc}, $$h(x):=x\left(\log \frac{1}{x}\right)^{-\beta},\quad  \beta>0,$$ is a strictly increasing function on $(0,1)$, there exists a constant $c_2>0$ such that for all $r\in (s/2,s),$
$$ \delta(\omega_N^{r,\log^{t}}) \geq c_2 >0.$$
Therefore, $ E^{s}_{\log^{t+1}}(\omega_N^{r,\log^{t}})$ are bounded above where $r\in (s/2,s)$. 
From this and \eqref{suptneg1}, 
\begin{equation}\label{suptneg}
\limsup_{r \to s^{-}}  \frac{g(r)-g(s)}{r-s} 
\leq  \limsup_{r \to s^{-}}E^{s}_{\log^{t+1}} (\omega_N^{r,\log^{t}})<\infty.
\end{equation}

Let $s\geq 0.$ Using Lemma \ref{ineqE}, we also obtain for any $\omega_N^{s,\log^t},$
$$
\limsup_{r \to s^{+}}  \frac{g(r)-g(s)}{r-s}
\leq \limsup_{r \to s^{+}} \frac{E^{r}_{\log^{t}} (\omega_N^{s,\log^{t}})-E^{s}_{\log^t} (\omega_N^{s,\log^{t}})}{r-s}
$$
\begin{equation}\label{suptpos}
\leq \lim_{r \to s^{+}}E^{r}_{\log^{t+1}} (\omega_N^{s,\log^{t}})=  E^{s}_{\log^{t+1}}(\omega_N^{s,\log^{t}})
\leq \inf G^{s}_{\log^{t+1}}(A,N)<\infty,
\end{equation}
and
\begin{align}\label{inftpos}
\liminf_{r \to s^{+}} \frac{g(r)-g(s)}{r-s} 
 \geq \liminf_{r \to s^{+}} \frac{E^{r}_{\log^{t}} (\omega_N^{r,\log^{t}})-E^{s}_{\log^{t}} (\omega_N^{r,\log^{t}})}{r-s}
 \geq \liminf_{r \to s^{+}} E^{s}_{\log^{t+1}}(\omega_N^{r,\log^{t}})>0,
\end{align}
where the second inequality in \eqref{suptpos} follows from rom the arbitrariness of $\omega_N^{s,\log^t}$ and the last inequality in \eqref{inftpos} follows from Lemma \ref{BoundedBel}.

The inequalities (\ref{inftneg}), (\ref{suptneg}), (\ref{suptpos}), and (\ref{inftpos}) imply that for all $s>0,$
\begin{equation}\label{leftcon}
0<\liminf_{r \to s^{-}} \frac{g(r)-g(s)}{r-s}  \leq \limsup_{r \to s^{-}}  \frac{g(r)-g(s)}{r-s} <\infty
\end{equation}
and for all $s\geq 0$
\begin{equation}\label{rightcon}
0<\liminf_{r \to s^{+}} \frac{g(r)-g(s)}{r-s} \leq \limsup_{r \to s^{+}}  \frac{g(r)-g(s)}{r-s} <\infty.
\end{equation}
The inequalities in \eqref{leftcon} and \eqref{rightcon} further imply that $g(s)$ is continuous for all $s> 0$ and is right continuous at $s=0$.
\end{proof}

\begin{proof}[Proof of Theorem \ref{clusters0}]
Let $s_0 >0.$ In order to show Theorem \ref{clusters0}, it suffices to show that any cluster configuration of $\omega_N^{s,\log^{t}}$ as $s \to s_0^{+}$ or as $s \to s_0^{-}$ is a minimal $N$-point $s_0,\log^{t}$-energy configuration on $A$.   

 Let $\omega_N^*$ be a cluster configuration of $\omega_N^{s,\log^{t}},$ as $s \to s_0^{+}.$ 
Then,
there is a sequence $\{ s_k\}_{k=1}^{\infty}\subset (s_o,\infty)$ such that $s_k \to s_0$ and  $\omega_N^{s_k,\log^{t}} \to  \omega_N^*$ as $k \to \infty$.
Let $\alpha=\textup{diam}(A).$ For any configuration of $N$ distinct points $\omega_N$ on $A$,
notice that
$\alpha^{s} E^{s}_{\log^{t}}(\omega_N)$ is an increasing function of $s$.
Applying the continuity of  $g(s):=\mathcal{E}_{\log^t}^s(A,N)$ at $s_0$, we have
$$
\alpha^{s_0}E^{s_0}_{\log^{t}}(\omega_N^{*})=\lim_{k \to \infty} \alpha^{s_0} E^{s_0}_{\log^{t}}( \omega_N^{s_k,\log^{t}}) \leq \lim_{k \to \infty} \alpha^{s_k} E^{s_k}_{\log^{t}}( \omega_N^{s_k,\log^{t}})
$$
$$=\lim_{k \to \infty} \alpha^{s_k} \mathcal{E}_{\log^t}^{s_k}(A,N)=\alpha^{s_0} \mathcal{E}_{\log^t}^{s_0}(A,N).
$$
This implies that $E^{s_0}_{\log^{t}}(\omega_N^{*})=\mathcal{E}_{\log^t}^{s_0}(A,N)$.
Hence, $\omega_N^{*}$ is a minimal $N$-point $s_0,\log^{t}$-energy configuration on $A$.

Let $\omega_N^{**}$ be a cluster configuration of $\omega_N^{s,\log^{t}},$ as $s \to s_0^{-}.$
Then, there is a sequence $\{ s_k\}_{k=1}^{\infty} \subset [0,s_0)$ such that $s_k \to s_0$ and
 $\omega_N^{s_k,\log^{t}} \to \omega_N^{**}$ as $k \to \infty$. 
Without loss of generality, we may assume that $s_0 /2<s_k<s_0$ for all $k$.
For any configuration of $N$ distinct points $\omega_N$ of $A$,
observe that
$\delta(\omega_N)^{s}E^{s}_{\log^{t}}(\omega_N)$ is a decreasing function of $s$.
It follows from the continuity of the function $g(s)$ that $g(s)$ is bounded above by some number $M>1$ for all $s \in (s_0/2,s_0).$
For every $s_0/2 < s_k < s_0$,
$$
(\delta(\omega_N^{s_k,\log^{t}}))^{-s_0/2}\bigg(\log \frac{1}{\delta(\omega_N^{s_k,\log^{t}})} \bigg)^{t} \leq ( \delta(\omega_N^{s_k,\log^{t}}))^{-s_k}\bigg( \log \frac{1}{\delta(\omega_N^{s_k,\log^{t}})} \bigg)^{t} 
$$
$$
\leq E^{s_k}_{\log^{t}}(\omega_N^{s_k,\log^{t}})\leq M.
$$
Then,
$$\delta(\omega_N^{s_k,\log^{t}})\bigg(\log \frac{1}{\delta(\omega_N^{s_k,\log^{t}})} \bigg)^{-2t/s_0} \geq M^{-2/s_0}>0.$$
Using Lemma \ref{inc}, there is a constant $c>0$ such that
\begin{equation*}
\delta(\omega_N^{s_k,\log^{t}}) \geq c >0 \quad  \text{ for all } \quad k \in \mathbb{N}.
\end{equation*}
Using the continuity of  $g(s):=\mathcal{E}_{\log^t}^s(A,N)$ at $s_0$, we have
$$
(\delta(\omega_N^{**}))^{s_0}  E^{s_0}_{\log^{t}}(\omega_N^{**})= \lim_{k \to \infty} (\delta(\omega_N^{s_k,\log^{t}}))^{s_0}  E^{s_0}_{\log^{t}}(\omega_N^{s_k,\log^{t}})
$$
$$\leq \lim_{k \to \infty} (\delta(\omega_N^{s_k,\log^{t}}))^{s_k}  E^{s_k}_{\log^{t}}(\omega_N^{s_k,\log^{t}})= \lim_{k \to \infty} (\delta(\omega_N^{s_k,\log^{t}}))^{s_k}  \mathcal{E}_{\log^t}^{s_k}(A,N)
$$
$$= (\delta(\omega_N^{**}))^{s_0}  \mathcal{E}_{\log^t}^{s_0}(A,N).
$$
This implies that $ E^{s_0}_{\log^{t}}(\omega_N^{**})= \mathcal{E}_{\log^t}^{s_0}(A,N)$.
Hence, $\omega_N^{**}$ is a minimal $N$-point $s_0,\log^{t}$-energy configuration on $A$.
\end{proof}

\begin{proof}[Proof of Theorem \ref{tptn}]
Firstly, we show \eqref{tptn1}.
Let $s\geq 0$ be fixed and $\{ r_k \}_{k=1}^{\infty} \subset (s,\infty)$ be a sequence  such that $r_k \to s$ as $k \to \infty$ and
\begin{equation}\label{use111}
\lim_{k \to \infty} E^{s}_{\log^{t+1}}(\omega_N^{r_k,\log^{t}})=\liminf_{r \to s^{+}}  E^{s}_{\log^{t+1}}(\omega_N^{r,\log^{t}}).
\end{equation}
Since $A^N$ is compact, there exists a subsequence $\{ s_\ell\}_{\ell =1}^{\infty} \subset \{r_k\}_{k=1}^{\infty}$ such that
\begin{equation}\label{wertyuil}
\lim_{\ell \to \infty}  \omega_N^{s_{\ell},\log^{t}}=\omega_N^{*}
\end{equation}
and $\omega_N^{*}$ is a minimal $N$-point $s,\log^t$-energy configuration by Theorem \ref{clusters0}.
By
$$\lim_{k \to \infty} E^{s}_{\log^{t+1}}(\omega_N^{r_k,\log^{t}})=\lim_{\ell \to \infty} E^{s}_{\log^{t+1}}(\omega_N^{s_\ell,\log^{t}}),$$
\eqref{suptpos}, \eqref{inftpos}, \eqref{use111}, and \eqref{wertyuil}, we get
$$
\liminf_{r \to s^{+}} \frac{g(r)-g(s)}{r-s} 
 \geq \liminf_{r \to s^{+}} E^{s}_{\log^{t+1}}(\omega_N^{r,\log^{t}})
=\lim_{\ell \to \infty} E^{s}_{\log^{t+1}}(\omega_N^{s_\ell,\log^{t}})
$$
\begin{equation}\label{argueas1}
=E^{s}_{\log^{t+1}}(\omega_N^{*})
\geq \inf G^{s}_{\log^{t+1}}(A,N) \\
 \geq \limsup_{r \to s^{+}}  \frac{g(r)-g(s)}{r-s}.
\end{equation}
Then,
\begin{equation}\label{infder}
g'_{+}(s)=\inf G^{s}_{\log^{t+1}}(A,N).
\end{equation}
It is easy to check that from Lemma \ref{BoundedBel}, the constant $\inf G^{s}_{\log^{t+1}}(A,N)$ in (\ref{infder})  is finite. This verifies \eqref{tptn1}.

Next, we prove \eqref{tptn2}.
Let $s>0$ be fixed and $\{ r_k \}_{k =1}^{\infty}\subset [0,s)$ be a sequence such that $r_k \to s$ as $k \to \infty$ and
\begin{equation}\label{use112}
\lim_{k \to \infty} E^{s}_{\log^{t+1}}(\omega_N^{r_k,\log^{t}})=\limsup_{r \to s^{-}}  E^{s}_{\log^{t+1}}(\omega_N^{r,\log^{t}}).
\end{equation}
Because $A^N$ is compact, there exists a subsequence $\{ s_\ell\}_{\ell =1}^{\infty} \subset \{r_k\}_{k=1}^{\infty}$ such that
$$\lim_{\ell \to \infty}  \omega_N^{s_{\ell},\log^{t}}=\omega_N^{**}$$
and $\omega_N^{**}$ is a minimal $N$-point $s,\log^t$-energy configuration by Theorem \ref{clusters0}.
Then, we get
\begin{equation}\label{ewrtsacd}
\lim_{k \to \infty} E^{s}_{\log^{t+1}}(\omega_N^{r_k,\log^{t}})=\lim_{\ell \to \infty} E^{s}_{\log^{t+1}}(\omega_N^{s_\ell,\log^{t}}).
\end{equation}
Using \eqref{inftneg}, \eqref{suptneg1}, \eqref{use112}, and \eqref{ewrtsacd}, we obtain
$$\liminf_{r \to s^{-}} \frac{g(r)-g(s)}{r-s}  
\geq \sup G^{s}_{\log^{t+1}}(A,N)\geq E^{s}_{\log^{t+1}} (\omega_N^{**})
$$
$$= \lim_{\ell \to \infty} E^{s}_{\log^{t+1}} (\omega_N^{s_\ell,\log^{t}}) = \limsup_{ r \to s^{-}}  E^{s}_{\log^{t+1}} (\omega_N^{r,\log^{t}})  
\geq \limsup_{r \to t^{-}} \frac{g(r)-g(s)}{r-s}.
$$
Then,
\begin{equation}\label{supder}
g'_{-}(s)=\sup G^{s}_{\log^{t+1}}(A,N).
\end{equation}

Next, we want to show that $\sup G^{s}_{\log^{t+1}}(A,N)$ is finite.
Let $\omega_N$ be a fixed configuration of $N$ distinct points on $A$ and let $\omega_N^{s,\log^{t}}$ be any minimal $N$-point $s,\log^t$ configurations. Then,
$$
(\delta(\omega_N^{s,\log^{t}}))^{-s}\left(\log \frac{1}{\delta(\omega_N^{s,\log^{t}})} \right)^{t} \leq  E^{s}_{\log^{t}}(\omega_N^{s,\log^{t}})
$$
$$\leq  E^{s}_{\log^{t}}(\omega_N) 
\leq ( \delta(\omega_N))^{-s}\left(\log \frac{1}{\delta(\omega_N)} \right)^{t} N(N-1).
$$
That is,
\begin{equation*}
\delta(\omega_N^{s,\log^{t}})\left(\log \frac{1}{\delta(\omega_N^{s,\log^{t}})} \right)^{-t/s} \geq  \delta(\omega_N)\left(\log \frac{1}{\delta(\omega_N)} \right)^{-t/s}( N(N-1))^{-1/s}=:c_1>0.
\end{equation*}
It follows from Lemma \ref{inc} that there is a constant  $c_2>0$ such that for any $\omega_N^{s,\log^{t}},$
$$\delta(\omega_N^{s,\log^{t}}) \geq c_2 >0.$$
Since by Lemma \ref{decre}, $$p(x):=\frac{1}{x^s}\left( \log \frac{1}{x} \right)^{t+1},$$ is a strictly decreasing function on $(0,1)$, the set $ G^{s}_{\log^{t+1}}(A,N)$ is bounded above.
This implies that $\sup G^{s}_{\log^{t+1}}(A,N)$ in (\ref{supder}) is finite. Hence, \eqref{tptn2} is proved.
\end{proof}

\begin{proof}[Proof of Theorem \ref{diff}]

\textbf{(a):} This is a direct consequence of Theorem \ref{tptn}.

\noindent \textbf{(b):} Let $s_0\geq 0$ and $\omega_N^{*}$ be a cluster configuration of $\{ \omega_N^{s,\log^{t}} \}$ as $s\rightarrow s_{0}^{+}$.  Then, there exists a sequence $\{s_k\}_{k=1}^{\infty} \subset (s_0,\infty)$ such that $\displaystyle\lim_{k\rightarrow \infty}s_k = s_0$ and $\displaystyle\lim_{k \to \infty} \omega_N^{s_k,\log^{t}}=\omega_N^{*}$.
Then, $\omega_N^{*}$ is a minimal $N$-point $s_0,\log^t$-energy configuration by Theorem \ref{clusters0}. Using  \eqref{tptn1} and the similar argument used to show \eqref{inftpos}, we have
$$
 E^{s_0}_{\log^{t+1}}(\omega_N^{*})=\lim_{k \to \infty} E^{s_0}_{\log^{t+1}}(\omega_N^{s_k,\log^{t}})
 \leq \lim_{k \to \infty } \frac{g(s_k)-g(s_0)}{s_k-s_0} =g'_{+}(s_0)=\inf G^{s_0}_{\log^{t+1}}(A,N).
$$
Since $\inf G^{s_0}_{\log^{t+1}}(A,N) \leq  E^{s_0}_{\log^{t+1}}(\omega_N^{*}),$
$$  E^{s_0}_{\log^{t+1}}(\omega_N^{*})=\inf G^{s_0}_{\log^{t+1}}(A,N)=g'_{+}(s_0).$$

\noindent \textbf{(c):} Let $s_0> 0$ and $\omega_N^{**}$ be a cluster configuration of $\{ \omega_N^{s,\log^{t}} \}$ as $s\rightarrow s_{0}^{-}$.  Then, there exists a sequence $\{s_k\}_{k=1}^{\infty} \subset [0,s_0)$ such that $\displaystyle \lim_{k\rightarrow \infty}s_k = s_0$ and $\displaystyle\lim_{k \to \infty} \omega_N^{s_k,\log^{t}}=\omega_N^{**}$.
Then, $\omega_N^{**}$ is a minimal $N$-point $s_0,\log^t$-energy configuration by Theorem \ref{clusters0}. Using  \eqref{tptn2} and  the similar argument used to show \eqref{suptneg}, we have
$$
E^{s_0}_{\log^{t+1}}(\omega_N^{**})=\lim_{k \to \infty} E^{s_0}_{\log^{t+1}}(\omega_N^{s_k,\log^{t}}) \geq   \lim_{k \to \infty}  \frac{g(s_k)-g(s_0)}{s_k-s_0}= g'_{-}(s_0)= \sup G^{s_0}_{\log^{t+1}} (A,N).
$$
Since $E^{s_0}_{\log^{t+1}}(\omega_N^{**}) \leq \sup G^{s_0}_{\log^{t+1}}(A,N)$,
$$E^{s_0}_{\log^{t+1}}(\omega_N^{**}) = \sup G^{s_0}_{\log^{t+1}} (A,N)=g'_{-}(s_0).$$

\noindent \textbf{(d):} This is a direct consequence of (b) and (c).
\end{proof}

\begin{proof}[Proof of Proposition \ref{Equalspace1}] Let  $N \geq 2,$ $s\geq 0,$ $t\geq 1,$ and $0<\alpha<\pi^{-1}.$ We prove this proposition using Lemma \ref{Optimal}.
The function $k:(0,1):\rightarrow \mathbb{R}$ in the lemma is 
$$k(x)=\frac{1}{x^s}\left( \log\frac{1}{x}\right)^t.$$
By Lemma \ref{decre}, $k(x)$ is strictly decreasing on $(0,1).$ 
Since for all $x\in (0,1),$ 
\begin{equation}\label{diff2} 
k''(x)=\frac{1}{x^{s+2}} \left(\log\frac{1}{x} \right)^{-2 + t} \left[(-1 + t) t + (t + 2 s t) \log\frac{1}{x} + s (1 + s) \log^2\frac{1}{x}\right]>0,
\end{equation}
$k(x)$ is strictly convex on $(0,1).$ Hence, because the function $k(x)$ satisfies all required properties in Lemma \ref{Optimal}, all minimal $N$-point $K$-energy configurations on $\mathbb{S}^1_{\alpha}$  are configurations of $N$ distinct equally spaced points on $\mathbb{S}^1_{\alpha}$ with respect  to the arc length and vice versa.
\end{proof}

\begin{proof}[Proof of Proposition \ref{Equalspace2}] Let  $N \geq 2,$  $0<\alpha<(e\pi)^{-1},$ and $s,t$ satisfy $s> 0,t\geq 0$ or $s=0,t>0.$ We can use the same lines of reasoning as in the proof of Proposition \ref{Equalspace1} except the function $k$ is considered on $(0,1/e)$ and for all $x\in (0,1/e),$
$$k''(x)=\frac{1}{x^{s+2}} \left(\log\frac{1}{x} \right)^{-2 + t} \left[(-1 + t) t + (t + 2 s t) \log\frac{1}{x} + s (1 + s) \log^2\frac{1}{x}\right]$$
$$\geq \frac{1}{x^{s+2}} \left(\log\frac{1}{x} \right)^{-2 + t} \left[t^2 + 2 s t \log\frac{1}{x} + s (1 + s) \log^2\frac{1}{x}+\left(\log\frac{1}{x}-1\right)t\right]>0.$$ Hence, because the function $k(x)$ satisfies all required properties in Lemma \ref{Optimal}, Proposition \ref{Equalspace2} is proved.
\end{proof}

\begin{proof}[Proof of Proposition \ref{Equalspace3}] Let  $N \geq 2,$ $s\geq 0,$ $t\geq 1,$ and $0<\alpha<1/2.$ Again, we want to use Lemma \ref{Optimal}. The function $k:(0,\pi \alpha]:\rightarrow \mathbb{R}$ in the lemma is 
$$k(x)=\left(\frac{1}{2\alpha \sin(x/2\alpha)}\right)^s \left(\log\frac{1}{2\alpha \sin(x/2\alpha)} \right)^t.$$ Since $2\alpha \sin(x/2\alpha)$ is strictly increasing on $(0,\pi \alpha]$ and $(1/x^s)(\log(1/x))^t$ is strictly decreasing on $(0,1),$ $k(x)$ is strictly decreasing on $(0,\pi \alpha].$ Next, we want to show that $k(x)$ is strictly convex on $(0,\pi \alpha],$ i.e.
\begin{equation}\label{need}
k''(x)>0\quad \textup{for all $x\in (0,\pi \alpha)$}.
\end{equation} To show \eqref{need}, it suffices to show that $q''(x)>0$ for all $x\in (0,\pi/2),$
where 
$$q(x):=\left( \frac{1}{2\alpha \sin x}\right)^s \left( \log \frac{1}{2\alpha \sin x}\right)^{t}.$$
Because for all $x\in (0,\pi/2),$
$$q''(x)=s(\cot^2 x) (2\alpha \sin x)^{-s} \left( \log\left( \frac{1}{2\alpha \sin x}\right)\right)^{t-1}$$
$$+(t-1)(\cot^2 x)(2\alpha \sin x)^{-s} \left(\log\left( \frac{1}{2\alpha \sin x}\right) \right)^{t-2}\left( s\log \left( \frac{1}{2\alpha \sin x}\right)+t\right)$$
$$+ (\csc^2x + s  \cot^2 x) (2\alpha \sin x)^{-s} \left( \log \left( \frac{1}{2\alpha \sin x}\right)\right)^{t-1}\left( s\log\left( \frac{1}{2\alpha \sin x}\right)+t\right)>0,
$$
$k(x)$ is strictly convex on $(0,\pi \alpha].$ Hence, the function $k(x)$ satisfies all required properties in Lemma \ref{Optimal}. This completes the proof. 
\end{proof}

\section{Discussion and Conclusions}

We introduce minimal $N$-point $s,\log^t$-energy constants
and configurations of an infinite compact metric space $(A,d)$. Such constants and configurations are generated using the kernel
$$K^s_{\log^t}(x,y)=\frac{1}{d(x,y)^s}\left(\log  \frac{1}{d(x,y)}\right)^t,\quad s\geq 0,\quad t\geq 0.$$
In this paper, we study the asymptotic properties of minimal $N$-point $s,\log^t$-energy constants
and configurations of $A$ with $\textup{diam}(A)<1,$ and $N\geq 2$ and $t\geq 0$ are fixed.
We show that the $s,\log^t$-energy $$g(s):=\mathcal{E}^s_{\log^t} (A,N)$$
 is continuous and right differentiable on $[0,\infty)$ and is left differentiable on $(0,\infty)$ in Theorems \ref{contE} and \ref{tptn}. The further analysis on  the differentiability of $g(s)$ can be found in Theorem \ref{diff}. In Theorem \ref{The2}, we show that
\begin{equation*}
\lim_{s \to \infty} \left( \mathcal{E}^{s}_{\log^t} (A,N) \right)^{1/s}=\frac{1}{\delta_N(A)}.
\end{equation*}
and every cluster configuration of $\omega_N^{s,\log^t}$ as $s \to \infty$ is an $N$-point best-packing configuration on $A$. Furthermore, we show in Theorem \ref{clusters0} that for any $s_{0}>0,$ any cluster configuration of $\omega_N^{s,\log^{t}},$ as $s \to s_0,$ is a minimal $N$-point $s_0,\log^{t}$-energy configuration on $A$. When $\textup{diam}(A)<1,$ our theorems generalize Theorems A, B, and C. The natural question would be ``Do Theorems \ref{The2}-\ref{diff} hold true for $\textup{diam}(A)\geq1$?"

Investigation on arrangements of $\omega_N^s$ on circles in $\mathbb{R}^2$ is in Propositions \ref{Equalspace1}-\ref{Equalspace3}.
In these propositions, we show that for certain values of $s$ and $t,$ all minimal $N$-point $s,\log^t$-energy configurations on $\mathbb{S}^1_{\alpha}$ with $\textup{diam}(\mathbb{S}^1_{\alpha})<1$ (corresponding to the Euclidean and geodesic distances) are the configurations of $N$ distinct equally spaced points. We would like to report that the lemma \ref{Optimal} does not allow us to say something when  $\textup{diam}(\mathbb{S}^{1}_{\alpha})\geq 1.$ It would be very interesting to develop a new tool to attack the case  when $\textup{diam}(\mathbb{S}^{1}_{\alpha})\geq 1.$





\end{document}